\newtheorem{theorem}{Theorem}
\newtheorem{proposition}[theorem]{Proposition}
\newtheorem{corollary}[theorem]{Corollary}
\newtheorem{lemma}[theorem]{Lemma}
\theoremstyle{definition}
\newtheorem{definition}[theorem]{Definition}
\newtheorem{remark}[theorem]{Remark}
\newtheorem*{notation}{Notation}
\newcommand{\nn}{\mathbb{N}}
\newcommand{\pp}{\mathbb{P}}
\newcommand{\qq}{\mathbb{Q}}
\newcommand{\rr}{\mathbb{R}}
\newcommand{\zz}{\mathbb{Z}}
\newcommand{\field}{\mathtt{k}}
\newcommand{\llgraffe}{\left\{}
\newcommand{\rrgraffe}{\right\}}
\newcommand{\ox}{\mathcal{O}_{X}} 
\newcommand{\odi}[1]{\mathcal{O}_{#1}}
\newcommand{\amplebu}[1]{\mathcal{O}_{X}({#1})}
\newcommand{\ggeq}{\stackrel{\scriptscriptstyle >}{\null_{\scriptscriptstyle{(=)}}}}
\newcommand{\lleq}{\stackrel{\scriptscriptstyle <}{\null_{\scriptscriptstyle{(=)}}}}
\newcommand{\ppreceq}{\stackrel{\scriptscriptstyle \prec}{\null_{\scriptscriptstyle (-)}}}
\newcommand{\ssucceq}{\stackrel{\scriptscriptstyle \succ}{\null_{\scriptscriptstyle (-)}}}
\newcommand{\ttI}{\mathtt{I}}
\newcommand{\ttJ}{\mathtt{J}}
\newcommand{\ttK}{\mathtt{K}}
\newcommand{\ttIbar}{\overline{\mathtt{I}}}
\newcommand{\kk}[2]{\textsf{k}_{\scriptscriptstyle{#1},{#2}}}
\newcommand{\kkvoid}{\textsf{k}}
\newcommand{\eps}{\mathbf{\varepsilon}}
\newcommand{\muttI}[1]{\mu_\ttI({#1})}
\newcommand{\mukk}[1]{\mu^\kkvoid({#1})}
\newcommand{\pttI}{P_\ttI}
\newcommand{\rttI}{R_\ttI}
\newcommand{\rttIa}[1]{R_{\ttI,{#1}}}
\newcommand{\gammattI}{\gamma_{\ttI}}
\newcommand{\gammattIi}[1]{\gamma_{\ttI,{#1}}}
\newcommand{\polhilb}{\mathsf{P}}
\newcommand{\hilpol}[1]{\mathsf{P}_{\scriptscriptstyle{#1}}}
\newcommand{\hilpolred}[1]{\mathsf{p}_{\scriptscriptstyle{#1}}}
\newcommand{\Lcal}{\mathcal{L}}
\newcommand{\Dsf}{\mathsf{D}}
\newcommand{\Lsf}{\mathsf{L}}
\newcommand{\efil}{E^{\bullet}}
\newcommand{\alphafil}{\underline{\alpha}}
\newcommand{\efiltration}{\efil,\alphafil}
\newcommand{\betafil}{\underline{\beta}}
\newcommand{\zetafil}{\underline{\zeta}}
\newcommand{\filtration}[2]{{#1}^{\bullet},\underline{{#2}}}
\newcommand{\ab}{_{a,b}}
\newcommand{\abc}{_{a,b,c}}
\newcommand{\pkn}[2]{p_{{#1}}\left({#2}\right)}
\newcommand{\rk}[1]{\text{rk}({#1})}
\newcommand{\rest}[2]{{#1}_{\mid_{#2}}}
\newcommand{\ol}[1]{\overline{#1}}
\newcommand{\ul}[1]{\underline{#1}}
\newcommand{\pivot}{\mathtt{p}}
\newcommand{\fatx}[3]{f_{{#1},{#2}}({#3})}
\newcommand{\underi}{\underline{i}}
\newcommand{\sigmatiny}{\tiny{\Sigma}}
\newcommand{\sigmaAtx}[3]{\Sigma_{{#1},{#2}}({#3})}
\newcommand{\aupleset}{A^{\scriptscriptstyle{ord}}}
\newcommand{\maxp}[2]{h_{{#1},{#2}}}
\title{Reduction of the semistability condition for tensors}
\author[A. Lo Giudice]{A. Lo Giudice$^{\S}$}
\thanks{The Author ${\S}$ is supported by the fapesp post-doctoral grant number 2013/20617-2}
\author[A. Pustetto]{A. Pustetto$^{\dag}$}
\thanks{The Author ${\dag}$ is supported by the post-doctoral grant 00006293 from Pontificia Universidad Javeriana}
\dedicatory{
$\S$ IMECC - UNICAMP, Department of Mathematics, Rua S{\'e}rgio Buarque de Holanda 651, Bar\~ao Geraldo, Campinas, SP - Brazil CEP 13083-859.\\
E-mail: {\tt alessiologiudic@gmail.com}\\
\bigskip
$\dag$ ​Pontificia Universidad Javeriana, Department of Mathematics, Carrera 7 \#40-62, Bogot\'a, Colombia.\\
E-mail: {\tt ex-apustetto@javeriana.edu.co, pustetto@gmail.com}
}
\subjclass[2010]{05A17, 14D22}
\keywords{Semistable tensor, partition of an integer, vector bundles on the projective line}
\date{\today}
\begin{document}
\begin{abstract}
In this article we study a special class of vector bundles, called tensors. A tensor consists of a vector bundle $E$ over a smooth irreducible projective variety and a morphism of vector bundles $\varphi$. As for classical vector bundles, there exists a notion of stability for these objects given in terms of filtrations of the vector bundle $E$. The aim of the present paper is to prove that if a destabilizing filtration  is "too" long then there exists a shorter subfiltration which destabilizes as well. Moreover, we describe some related combinatorial problems, which arise from the description of a tensor $(E,\varphi)$ or, more precisely, a filtration of $E$ as a $a$-dimensional matrix. Eventually, as example we study semistable tensors on the projective line.

\end{abstract}
\maketitle
\section{Introduction}
A tensor consists, roughly speaking, of a (coherent) sheaf $E$ over a smooth variety $X$, ``decorated'' with a morphism $\varphi$  from $(E^{\otimes a})^{\oplus b}$ to $(\det E)^{\otimes c}\otimes\Dsf_u$ where $\Dsf_u$ is a torsion free sheaf over $X$ (see Definition \ref{def-tensor}). A slightly different notion of sheaf decorated with a morphism was introduced by Schmitt, (see, for example \cite{Sch-singular}, \cite{Sch-closerlook}, \cite{Sch-global-bound}) while the more general notion of tensor was introduced by G\'omez and Sols in \cite{GS-1}. In both cases, such objects share the same semistability condition and gain their importance because they include many types of sheaves such as principal bundles, framed bundles, Higgs bundles, orthogonal and symplectic sheaves, and many others.

Recently, using this formalism, G\'omez, Langer, Schmitt and Sols construct the moduli spaces of semistable principal bundles over smooth projective varieties over algebraically closed fields of positive characteristic \cite{GSLS}. The semistability notion is very important for sheaves and it plays a fundamental role in the construction of their moduli space. Unfortunately the semistability condition for tensors, as well as the slope semistability condition, is quite complicated and has to be checked over all weighted filtrations, of any length, of the given sheaf $E$ (see Definition \ref{def-ss}).\\

Let $(E,\varphi)$ be a tensor. We associate to a weighted filtration $(\efiltration)_\ttI$, indexed by the set of indexes $\ttI$, a matrix $M_\ttI(\efiltration)$ which remembers the behavior (i.e. to be zero or not) of $\varphi$ over the given filtration. These matrices, as well as the behavior of $\varphi$ over the filtration, are uniquely determined by particular elements of the matrix, called ``pivots'' (see Section \ref{sec-associated-matrix} for the constructions and definitions), and more pivots determine a matrix more complicated is the behavior of $\varphi$ over the given filtration. As main result of this paper (Theorem \ref{teo-main}) we prove that if the length of a destabilizing filtration is (strictly) greater than the number of pivots associated to it, then exists a proper subfiltration which destabilize as well.
The number of pivots associated to a filtration is proportional to the complexity of the behavior of $\varphi$ over the given filtration. As consequence of Theorem \ref{teo-main} we obtain a reduction of the (semi)stability condition: it is enough to check the (semi)stability of tensors over subsheaves and filtrations with ``enough'' pivots, instead of over all filtrations. Moreover, in Section \ref{sec-combinatorial-considerations}, we investigate some combinatorial problems rising from the matrix $(\efiltration)_\ttI$ associated to a given filtration $(\efiltration)_\ttI$, and in particular we determine the maximum number of pivots, which could determine such a matrix, as a function of the rank and the type of the tensor $(E,\varphi)$. Eventually, in Section \ref{sec-rank-3-tensors-on-p1}, we study rank $3$ semistable tensors on $\pp^1$.\\

\begin{notation}
We use the convention that whenever “(semi)stable” and “$\lleq$” (resp. $\ggeq$) appear in a sentence, two statements should be read: one with “semistable” and “$\leq$” (resp. $\geq$) and another with “stable” and “$<$” (resp. $>$).\\
All the schemes in the paper are locally noetherian. A variety is an irreducible and reduced separated scheme of finite type over an algebraically closed field $\field$ of characteristic zero.
\end{notation}

\bigskip
{\noindent\bf Acknowledgment.} We thank Professors Ugo Bruzzo, Marcos Jardim and Beatriz Grana Otero for useful discussions.  
\bigskip

\section{Semistability conditions}\label{sec-semistability-conditions}

Let $(X,\amplebu{1})$ be an $n$ dimensional polarized smooth variety over an algebraically closed field $\field$. A family $\{\Dsf_u\}_{u\in R}$ of locally free sheaves over $X$ parametrized by a scheme $R$ is a locally free sheaf $\Dsf$ on $X\times R$, and for a given closed point $u\in R$, we denote by $\Dsf_u$ the restriction to the slice $X\times\{u\}$.\\

From now on we fix a polynomial $\polhilb$ of degree $n$ and integer numbers $a,b,c,d,r$ with $a,b,r\geq1$ and $c\geq0$.
\begin{definition}[\cite{GS-1} Definition $1.1$]\label{def-tensor}
 A \textbf{tensor} of type $(a,b,c,\Dsf,R)$ over $X$ is a triple $(E,\varphi,u)$ where $E$ is a coherent sheaf with Hilbert polynomial $\hilpol{E}=\polhilb$, degree $\deg (E)= d$ and  rank $\rk{E}=r$, $R$ is a scheme, $\Dsf$ is a locally free sheaf over $X\times R$ and $\varphi$ is  morphism
\begin{equation*}
 \varphi:( E^{\otimes a})^{\oplus b}\longrightarrow(\det E)^{\otimes c}\otimes\Dsf_u
\end{equation*}
not identically zero.

Sometimes we will simply call these objects tensors instead of tensors of type $(a,b,c,\Dsf,R)$ if the input data are clear by the context.
\end{definition}

\begin{remark}\label{rem-abc-equivale-ab}
The notion of tensor generalizes the notion of decorated sheaf introduced by Schmitt (\cite{Sch-global-bound}) and studied by the authors in \cite{AlePu2}. We recall the definition of the latter:
\begin{definition}\label{def-decorato}
A \textbf{decorated sheaf} of type $(a,b,c,\Lsf)$ over $X$ is the datum of a torsion free sheaf $E$ over $X$ and a non-zero morphism
\begin{equation}\label{eq-def-decorato}
\varphi:E\abc\doteqdot (E^{\otimes a})^{\oplus b}\otimes(\det E)^{\otimes -c}\longrightarrow\Lsf,
\end{equation}
\end{definition}
where $\Lsf$ is a line bundle over $X$.\\
The morphism $\varphi:E\abc\to\Lsf$ induces a morphism $E\ab\to(\det E)^{\otimes c}\otimes\Lsf$. By an abuse of notation, we still refer to the latter as $\varphi$. In this context it is easy to see that a decorated sheaf of type $(a,b,c,\Lsf)$ corresponds (uniquely up to isomorphisms) to a tensor of type $(a,b,c,\Dsf)$, where we have chosen $R=\{\text{pt}\}$ and $\Dsf$ the pullback of $\Lsf$ over $X\times R$.
\end{remark}

\begin{remark}
 The category $\mathfrak{C}^{\Lcal}_{a,b,c,\Dsf,R}$ of tensors with fixed determinant $\det E\simeq\Lcal$ of type $(a,b,c,\Dsf,R)$ is equivalent to the category $\mathfrak{C}^{\Lcal}_{a,b,0,\pi_X^*\Lcal^{\otimes c}\otimes\Dsf,R}$ of tensors with fixed determinant of type $(a,b,0,\pi_X^*\Lcal^{\otimes c}\otimes\Dsf,R)$, where $\pi_X^{\phantom a}:X\times R\to X$ is the projection. Indeed it is easy to see that map
 \begin{align*}
  \mathfrak{C}^{\Lcal}_{a,b,c,\Dsf,R} & \longrightarrow\mathfrak{C}^{\Lcal}_{a,b,0,\pi_X^*\Lcal^{\otimes c}\otimes\Dsf,R}\\
  (E,\varphi) & \longmapsto(E,\varphi)
 \end{align*}
 is an equivalence of categories.\\
\end{remark}

Since we are interested in studying the semistability condition of a given tensor, and not of families, from now on we will consider tensors of type $(a,b,0,\Dsf,R)$ with $R=\{\text{pt}\}$. Therefore, from now on, $\Dsf$ will be regarded as a torsion free sheaf over $X$ and we will denote by $(E,\varphi)$ the triple $(E,\varphi,\text{pt})$ and by $(a,b,\Dsf)$ the quintuple $(a,b,0,\Dsf,\{\text{pt}\})$. If $X$ is not smooth the determinant of $E$ could be not defined, so it is not possible to define tensors having $c\neq0$ over a non-smooth variety. As we restricted considering only tensors of type $(a,b,\Dsf)$, from now on we admit $X$ to be singular. The definitions of semistability and $\kkvoid$-semistability, that we are just about to introduce, are the same both for tensors $(E,\varphi,u)$ of type $(a,b,c,\Dsf,R)$ both for tensors of type $(a,b,\Dsf)$. For these reasons and for simplicity's sake we will give such definitions only for the latter.\\

Let $(E,\varphi)$ be a tensor of type $(a,b,\Dsf)$, consider the following filtration
\begin{equation}
 \efil:\qquad 0\subsetneq E_{i_1}\subsetneq\dots\subsetneq E_{i_s}\subsetneq E_{r}=E
\end{equation}
of saturated subsheaves of $E$, and let $\alphafil=(\alpha_{i_1},\dots,\alpha_{i_s})$ be a vector of positive rational numbers. Finally let us denote by $\ttI=\{i_1,\dots,i_s\}$ the set of indexes appearing in the filtration (we request that the set of indexes satisfies the property that for any $j$, $i_j<i_{j+1}$) and by $|\ttI|$ its cardinality. We will refer to the pair $(\efiltration)_\ttI$ as \textbf{weighted filtration} of $E$ indexed by $\ttI$, or simply weighted filtration. A weighted filtration defines the following polynomial
\begin{equation}\label{eq-def-P-di-filtrazione}
 \pttI(\efiltration)\doteqdot\sum_{i\in\ttI}\alpha_i\left(\hilpol{E}\cdot\rk{E_i}-\rk{E}\cdot\hilpol{E_i}\right),
\end{equation}
and the rational number
\begin{equation}\label{eq-def-slope-ss1}
 L_\ttI(\efiltration)\doteqdot\sum_{i\in\ttI}\alpha_i\left(\deg{E}\cdot\rk{E_i}-\rk{E}\cdot\deg{E_i}\right),
\end{equation}
where $\hilpol{E_i}$ denotes the Hilbert polynomial of $E_i$.
Finally we associate to $(\efiltration)_\ttI$ the following rational number also depending on $\varphi$,
\begin{equation}\label{eq-def-mu}
 \muttI{\efiltration;\varphi}\doteqdot -\min_{i_1,\dots,i_a\in\ttIbar}\{ \gammattIi{i_1}+\dots+\gammattIi{i_a} \,|\, \rest{\varphi}{(E_{i_1}\otimes\dots\otimes E_{i_a})^{\oplus b}}\not=0\},
\end{equation}
where $\ttIbar\doteqdot\ttI\cup\{r\}$ and
\begin{align}
  \gammattI & =(\gammattIi{1},\dots,\gammattIi{r})\nonumber\\ 
             & \doteqdot \sum_{i\in\ttI} \alpha_i (\underbrace{\rk{E_i}-r,\dots,\rk{E_i}-r}_{\rk{E_i}\text{-times}},\underbrace{\rk{E_i},\dots,\rk{E_i}}_{r-\rk{E_i}\text{-times}}).
\end{align}\\
The notion of semistability for a tensor depends on a stability parameter $\delta$, which essentially measures how far a semistable tensor is from being semistable in the usual way. The parameter $\delta$ is a rational polynomial $\ol{\delta} x^{n-1}+\delta_{n-2} x^{n-2}\dots+\delta_{1}x+\delta_{0}$ 
with positive leading coefficient $\ol{\delta}>0 $.

\begin{definition}[\textbf{Semistability}]\label{def-ss}
 Let $(E,\varphi)$ be a tensor of type $(a,b,\Dsf)$. Then $(E,\varphi)$ is $\mathbf{\delta}$-\textbf{(semi)stable} if for any weighted filtration $(\efiltration)_\ttI$ the following inequality holds:
\begin{equation}\label{eq-def-ss}
 \pttI^\mu(\efiltration;\varphi) = \pttI(\efiltration)+\delta\muttI{\efiltration;\varphi}\ssucceq0.
\end{equation}
 The tensor is \textbf{slope $\ol{\delta}$-(semi)stable} if
\begin{equation}\label{eq-def-slope-ss}
 L_\ttI^\mu(\efiltration;\varphi) = L_\ttI(\efiltration)+\ol{\delta}\muttI{\efiltration;\varphi}\ggeq0.
\end{equation}
Sometimes we will write $P^\mu$ (resp. $L^\mu$) instead of $\pttI^\mu$ (resp. $L_\ttI^\mu$) if the set of indexes $\ttI$ is understood. Moreover, from now on, we will write (semi)stable (resp. slope (semi)stable), instead of $\delta$-(semi)stable (resp. slope $\ol{\delta}$-(semi)stable), unless we want to stress the reader's attention on the parameter $\delta$ (resp. $\ol{\delta}$).
\end{definition}

\begin{remark}
Similarly to the case of sheaves, we have the following chain of implications (see \cite{GS-1})
\begin{equation*}
\text{slope }\ol{\delta}(n-1)!\text{-stable}\Rightarrow\delta\text{-stable}\Rightarrow\delta\text{-semistable}\Rightarrow\text{slope }\ol{\delta}(n-1)!\text{-semistable}.
\end{equation*}
\end{remark}
\begin{remark}\label{rem-diamond}
 \begin{enumerate}
  \item Let $(\efiltration)_\ttI$ be a weighted filtration indexed by $\ttI$ and let $\muttI{\efiltration;\varphi}=-(\gammattIi{i_1}+\dots+\gammattIi{i_a})$. Then there exists a permutation $\sigma:\{i_1,\dots,i_a\}\to\{i_1,\dots,i_a\}$ such that $\rest{\varphi}{(E_{\sigma(i_1)}\otimes\dots\otimes E_{\sigma(i_a)})^{\oplus b}}\not=0$. 
  \item From now on we will write
\begin{equation*}
 \rest{\varphi}{(E_{i_1}\diamond\dots\diamond E_{i_a})^{\oplus b}}\not=0
\end{equation*}
if there exists a permutation $\sigma:\{i_1,\dots,i_a\}\to\{i_1,\dots,i_a\}$ such that $\rest{\varphi}{(E_{\sigma(i_1)}\otimes\dots\otimes E_{\sigma(i_a)})^{\oplus b}}\not=0$.
 \end{enumerate}
\end{remark}

\begin{definition}
 Let $(E,\varphi)$ by a tensor of type $(a,b,\Dsf)$ and $(\efiltration)_\ttI$ be a weighted filtration of $E$ indexed by $\ttI$. For any $i\in \ttI$ let $(0\subset E_i\subset E,\alpha_i)$ be the induced length one filtration. We will say that $(\efiltration)_\ttI$ is \textbf{non-critical} if
 \begin{equation*}
  \muttI{\efiltration;\varphi} = \sum_{i\in\ttI} \mu_{\{i\}}(0\subset E_i\subset E,\alpha_i;\varphi),
 \end{equation*}
 and \textbf{critical} otherwise.
 
 We say that the filtration $(\efiltration)_\ttI$ \textbf{splits} (or, analogously, that it is a splitting filtration) if exist two proper subsets of indexes $\ttJ,\ttK\subsetneq\ttI$ with $\ttJ\cup\ttK=\ttI$, and two vectors of positive rational numbers $\betafil\in\qq_+^{|\ttJ|}$ and $\zetafil\in\qq_+^{|\ttK|}$ such that
 \begin{equation*}
  \pttI^\mu(\efiltration;\varphi)=P^\mu_\ttJ(\efil,\betafil;\varphi)+\pttI^\mu(\efil,\zetafil;\varphi).
 \end{equation*}
 Otherwise, we say that it is \textbf{non-splitting}.\\
\end{definition}

Now we will introduce another notion of semistability for tensors that will be useful in the future. This notion was already introduced and studied in \cite{Pu} in the case of decorated sheaves.\\

Let $(E,\varphi)$ be a tensor and let $F$ be a subsheaf of $E$, then define
\begin{equation*}
  \kk{F}{E}=\kkvoid(F,E;\varphi)=\begin{cases}
                                                          a \text{ if } \rest{\varphi}{F\ab}\neq0\\
                                                          k \text{ if } \rest{\varphi}{F^{\diamond k}\diamond E^{\diamond (a-k)}}\neq0\text{ and }\rest{\varphi}{F^{\diamond(k+1)}\diamond E^{\diamond (a-k-1)}}=0\\
                                                          0 \text{ otherwise,}
                                                         \end{cases}
\end{equation*}

\begin{definition}[\textbf{$\kkvoid$-semistability}]
Let $(E,\varphi)$ be a tensor of type $(a,b,\Dsf)$ of positive rank; we will say that $(E,\varphi)$ is $\mathbf{\kkvoid}$-(semi)stable or slope $\mathbf{\kkvoid}$-(semi)stable if and only if for any proper subsheaf $F$ the following inequalities hold
\begin{align*}
& \mathbb{\kkvoid}\textbf{-(semi)stable } \quad & \rk{E}(\hilpol{F}-\delta\kk{F}{E}) & \ppreceq\rk{F}(\hilpol{E}-a\delta),\\
& \textbf{slope }\mathbb{\kkvoid}\textbf{-(semi)stable } \quad & \rk{E}(\deg(F)-\ol{\delta}\kk{F}{E}) & \lleq\rk{F}(\deg(E)-a\ol{\delta}).
\end{align*}
If $E$ is torsion free and $F$ is a proper subsheaf let us define $\mukk{F}=\mu(F)-\frac{\ol{\delta}\kk{F}{E}}{\rk{F}}$ and $\hilpolred{F}^\kkvoid = \hilpolred{F}-\frac{\delta\kk{F}{E}}{\rk{F}}$, where $\mu(F)=\frac{\deg(F)}{\rk{F}}$ and $\hilpolred{F}=\frac{\hilpol{F}}{\rk{F}}$, then the above conditions become
\begin{align*}
\hilpolred{F}^\kkvoid & \ppreceq\hilpolred{E}^\kkvoid\\
\mukk{F} & \lleq \mukk{E},
\end{align*}
respectively.
\end{definition}

\begin{remark}
 Let $(E,\varphi)$ and $F$ be as before. A straightforward calculation shows that
 \begin{equation*}
\mu(0\subset F\subset E,1;\varphi)=\rk{E}\,\kk{F}{E}-a\,\rk{F}.
\end{equation*}
Therefore, the $\kkvoid$-semistability condition coincides with the semistability condition for filtrations of length one. This clearly implies that
\begin{equation*}
\text{ (semi)stability } \Rightarrow \; \kkvoid\text{-(semi)stability}
\end{equation*}
and 
\begin{equation*}
\text{ slope (semi)stability } \Rightarrow \text{ slope }\kkvoid\text{-(semi)stability}
\end{equation*}
Therefore, a tensor $(E,\varphi)$ is (semi)stable (resp. slope (semi)stable) if and only if it is $\kkvoid$-(semi)stabile (resp. slope $\kkvoid$-semistable) and condition \eqref{eq-def-ss} (resp. \eqref{eq-def-slope-ss}) holds for any critical weighted filtration. 
\end{remark}
\subsection{The associated matrix}\label{sec-associated-matrix}
 Let $(E,\varphi)$ be a tensor of type $(a,b,\Dsf)$, and fix a weighted filtration $(\efiltration)_{\ttI}=(0\subsetneq E_{i_1}\subsetneq\dots\subsetneq E_{i_s}\subsetneq E_{r}=E; \alphafil=(\alpha_i)_{i\in \ttI})$ indexed by $\ttI=\{i_1,\dots,i_s\}$. \\
Let $M_{\ttI}(\efil;\varphi)$ be the associated $a$-dimensional matrix, that is, the matrix defined by the following equation,
 \begin{equation*}
m_{i_1\dots i_a}\doteqdot\begin{cases}
                          1 \text{ if } \rest{\varphi}{(E_{i_1}\diamond\dots\diamond E_{i_a})^{\oplus b}}\not=0,\\
                          0 \text{ otherwise,}
                         \end{cases}
\end{equation*}
where $i_1,\dots,i_a\in\ttIbar$. Note that $M_{\ttI}(\efil,\varphi)$ is symmetric, that is, $m_{i_1\dots i_a}=m_{\sigma(i_1)\dots\sigma(i_a)}$ for any permutation $\sigma$ in the symmetric group $S_a$.\\

\begin{definition}\label{def-partial-order}
Let $\aupleset=\{(i_1,\dots,i_a)\in\ttIbar^{a} \,|\, i_1\leq\dots\leq i_a\}$ be the set of ordered $a$-tuples. We define a partial ordering $\curlyeqprec$ over $\aupleset$ in the following way. Let $\underline{i}=(i_1,\dots,i_a)$ and $\underline{j}=(j_1,\dots,j_a)$ be two elements of $\aupleset$, then
\begin{equation*}
 \underline{i}\curlyeqprec\underline{j} \quad\Longleftrightarrow\quad i_s\geq j_s \; \text{for any } s\in \{1,\dots,a\}
\end{equation*}
We will say that two elements $\underline{i},\underline{j}\in\aupleset$ are \textbf{comparable}, and we will denote it by $\underline{i}\sim\underline{j}$, if and only if $\underline{i}\curlyeqprec\underline{j}$ or $\underline{j}\curlyeqprec\underline{i}$. We will say that they are \textbf{incomparable}, and we will denote it by $\underline{i}\not\sim\underline{j}$, otherwise.
\end{definition}

Note that, if $m_{i_1\dots i_a}=1$ for a certain $a$-tupla $(i_1,\dots,i_a)$, then it is easy to see that $m_{j_1\dots j_a}=1$ for any $(j_1,\dots,j_a)\curlyeqprec(i_1,\dots,i_a)$. Conversely, if $m_{i_1\dots i_a}=0$, then $m_{j_1\dots j_a}=0$ for any $(j_1,\dots,j_a)\curlyeqsucc(i_1\dots i_a)$. Let $\aupleset_1=\{(i_1,\dots,i_a)\in\aupleset \,|\, 
m_{i_1\dots i_a} = 1\}$, then the set $\mathtt{P}$ of $\curlyeqprec$-maximal elements of $\aupleset_1$ uniquely determine $M_{\ttI}(\efil;\varphi)$. We will call these elements \textbf{pivots} and we will denote them with the character $\pivot$. From now on we will identify $M_{\ttI}(\efil;\varphi)$ with the set of its pivots $\mathtt{P}=\{\pivot_1,\dots,\pivot_p\}$, where $\pivot_i=(\pivot_{i1},\dots,\pivot_{ia})\in\ttIbar^{a}$.\\

Let $(\efiltration)_\ttI$ be a weighted filtration and let $M_{\ttI}(\efil;\varphi)=\{\pivot_1,\dots,\pivot_p\}$ be the associated matrix. Denoting with $r_i$ the rank of $E_i$, define
\begin{align}\label{eq-def-costanti-ci-grandi-e-piccole}
& C_i\doteqdot r_i\hilpol{E}-r\hilpol{E_i}-a\delta r_i,\\
& c_i\doteqdot r_i\deg E-r\deg E_i-a\ol{\delta} r_i, \nonumber
\end{align}
 and
\begin{equation*}
\rttIa{\alphafil}=\rttI(\efiltration;\varphi)=\max_{\pivot_i\in M_\ttI(\efil;\varphi)}\left\{ \rttIa{\alphafil}(\pivot_i))\right\}
\end{equation*}
where, for an element $\underline{i}=(i_1,\dots,i_a)\in\aupleset$,
\begin{equation*}
\rttIa{\alphafil}(\underline{i})=\sum_{j=1}^a\left(\sum_{s\geq \underline{i}_{j}, s\in\ttI}\alpha_s\right).
\end{equation*}
Note that
\begin{equation*}
 \max_{\pivot_i\in M_\ttI(\efil;\varphi)}\left\{ \rttIa{\alphafil}(\pivot_i))\right\} = \max_{\underline{i}\in\aupleset}\left\{ \rttIa{\alphafil}(\underline{i})) \;|\; \rest{\varphi}{(E_{i_1}\otimes\dots\otimes E_{i_a})^{\oplus b}}\not=0 \right\},
\end{equation*}
indeed, if $\underline{i}\curlyeqprec\underline{j}$ then $\rttIa{\alphafil}(\underline{i})\leq\rttIa{\alphafil}(\underline{j})$ and the pivots are exactly the $\curlyeqprec$-maximal elements of $\aupleset_1$.\\

Using this formalism the (semi)stability condition (\ref{eq-def-ss}) is equivalent to the following,
\begin{equation*}
 \sum_{i\in\ttI} \alpha_i C_i + r\delta\rttIa{\alphafil}\ssucceq 0,
\end{equation*}
while the slope (semi)stability condition (\ref{eq-def-slope-ss}) is equivalent to the following,
\begin{equation*}
 \sum_{i\in\ttI} \alpha_i c_i + r\ol{\delta}\rttIa{\alphafil}\ggeq0.
\end{equation*}
Indeed, suppose that the minimum of $\muttI{\efiltration;\varphi}$ is attained in $(i_1,\dots,i_a)$. Then $(i_1,\dots,i_a)$ must coincides with a pivot $\pivot_j=(\pivot_{j1},\dots,\pivot_{ja})$ of $M_{\ttI}(\efil;\varphi)$ and
\begin{align*}
 \muttI{\efiltration;\varphi}= & -(\gammattIi{i_1}+\dots+\gammattIi{i_a})\\
                               = & -\left( \sum_{l\in\ttI}\alpha_l r_l - \sum_{l\geq i_1}\alpha_l r + \dots + \sum_{l\in\ttI}\alpha_l r_l - \sum_{l\geq i_a}\alpha_l r\right)\\
                               = & -a\sum_{l\in\ttI}\alpha_l r_l+r\left(\sum_{l\geq i_1}\alpha_l+\dots+\sum_{l\geq i_a}\alpha_l\right)\\
                               = & -a\sum_{l\in\ttI}\alpha_l r_l+r\left(\sum_{j=1}^a\left(\sum_{l\geq \pivot_{ij}, l\in\ttI}\alpha_l\right)\right)\\
                               = & -a\sum_{l\in\ttI}\alpha_l r_l+r\rttIa{\alphafil}(\pivot_j).
\end{align*}

So
\begin{equation*}
\pttI(\efiltration)+\delta\muttI{\efiltration;\varphi}=\sum_{i\in\ttI}\alpha_i C_i+r\delta\rttIa{\alphafil}
\end{equation*}
and
\begin{equation*}
L_\ttI(\efiltration)+\ol{\delta}\muttI{\efiltration;\varphi}=\sum_{i\in\ttI}\alpha_i c_i+r\ol{\delta}\rttIa{\alphafil}.
\end{equation*}
 
\section{Main results}

In this section we proof that, if the length of a filtration is greater than its complexity, that is the number of pivots of the corresponding matrix, then the filtration does not play a role in semistability condition. More precisely, if such a filtration destabilizes, then there exists a proper subfiltration which destabilizes as well. This theorem extends \cite[Theorem 12]{AlePu2} where the same result is given in the particular case $a=2$.
Finally, we give some special conditions whereby a fixed filtration splits.\\


\begin{theorem}\label{teo-main}
Let $(E,\varphi)$ be a tensor of type $(a,b,\Dsf)$ and let $(\efiltration)_\ttI$ be a weighted filtration indexed by $\ttI=\{i_1,\dots,i_s\}$. Let $$M_\ttI(\efiltration)=\{\pivot_1,\dots,\pivot_p\}=(m_{i_1\dots i_a})_{i_j\leq i_{j+1},i_j\in\{1,\dots,a\}}$$ be the associated matrix, where $\pivot_i=(\pivot_{i1},\dots,\pivot_{ia})\in\ttIbar^{a}$ is the $i$-th pivot.\\
If $s\geq p+1$ and $\pttI^\mu(\efiltration;\varphi)\ppreceq0$ (resp. $L_\ttI^\mu(\efiltration;\varphi)\lleq0$), then there exist a proper subset $\ttJ\subset\ttI$ and a vector $\betafil=(\beta_j)_{j\in\ttJ}$ of positive rational numbers such that $P_\ttJ^\mu(\filtration{E}{\beta};\varphi)\ppreceq0$ (resp. $L_\ttJ^\mu(\filtration{E}{\beta};\varphi)\lleq0$).
\end{theorem}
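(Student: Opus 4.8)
The plan is to translate the statement into convex geometry over the relevant totally ordered target — the Hilbert polynomials with the Gieseker order for $\pttI^\mu$, or $\qq$ for $L_\ttI^\mu$ — and then to slide the weight vector to the boundary of the positive orthant. I will describe the argument for $\pttI^\mu$; the slope case is verbatim with $\delta,C_i$ replaced by $\ol\delta,c_i$. Regarding $\alphafil=(\alpha_i)_{i\in\ttI}$ as a variable in $\qq_{>0}^{s}$, I use the computation preceding the theorem to write
\[
\pttI^\mu(\efiltration;\varphi)=\sum_{i\in\ttI}\alpha_i C_i+r\delta\,\rttIa{\alphafil},\qquad \rttIa{\alphafil}=\max_{1\le j\le p}\rho_j(\alphafil),
\]
where $\rho_j(\alphafil)=\rttIa{\alphafil}(\pivot_j)=\sum_{i\in\ttI}\alpha_i\,n_{ji}$ is linear with nonnegative integer coefficients $n_{ji}=\#\{\,k:\pivot_{jk}\le i\,\}$. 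The first technical step is a \emph{restriction identity}: for $\ttJ\subsetneq\ttI$ and weights $\betafil$ supported on $\ttJ$ (extended by zero to $\ttI$), one has $P_\ttJ^\mu(\efil,\betafil;\varphi)=\pttI^\mu$ evaluated at that extension. I would check this by replacing each entry of a pivot of $M_\ttI$ by the smallest element of $\ttJ\cup\{r\}$ that is $\ge$ it, noting that $\rest{\varphi}{}$ stays nonzero under this rounding, so that the maximum defining $\rttJa{\betafil}$ agrees with the restricted maximum. Granting this, the theorem reduces to showing that the feasible cone $\{\alphafil\ge0:\pttI^\mu(\alphafil)\ppreceq0\}$ contains a nonzero point with at least one vanishing coordinate.

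Next I would run the sliding argument. Since $p<s$, the $p$ linear forms $\rho_1,\dots,\rho_p$ have a common kernel $K'=\bigcap_j\ker\rho_j$ of dimension $\ge s-p\ge1$. Along any $v\in K'$ the maximum $\rttIa{\,\cdot\,}$ is constant, hence
\[
\pttI^\mu(\alphafil+tv)=\pttI^\mu(\alphafil)+t\,L_0(v),\qquad L_0(v)=\sum_{i\in\ttI}v_iC_i,
\]
is affine in $t$ with polynomial slope $L_0(v)$. If I can choose $v\in K'$ having a negative coordinate and $L_0(v)\ppreceq0$, then increasing $t$ keeps $\pttI^\mu\ppreceq0$ and drives some coordinate to $0$; stopping at the first such $t$ produces $\betafil\ge0$, $\betafil\ne0$, supported on a proper $\ttJ\subsetneq\ttI$, with $\pttI^\mu(\betafil)\ppreceq0$, and the restriction identity upgrades this to $P_\ttJ^\mu\ppreceq0$. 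Such a $v$ exists whenever $\dim K'\ge2$ (a subspace of dimension at least two meets the complement of the sign-definite vectors, since its intersection with the orthant is pointed; and among $\pm v$ one has $L_0\ppreceq0$), and also when $\dim K'=1$ unless its generator is sign-definite with the wrong slope sign.

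The only remaining (stuck) case is $\dim K'=1$ with generator $w\ge0$ and $L_0(w)\prec0$. Here $\sum_i w_i n_{ji}=0$ with $w_i,n_{ji}\ge0$ forces $n_{ji}=0$ for every $i$ with $w_i>0$ and every nontrivial pivot; that is, the indices $\ttK=\{i:w_i>0\}$ are invisible to all pivots other than $(r,\dots,r)$. Consequently $\rttIa{\alphafil}$ does not involve the $\ttK$-weights, $R_{\ttK}(\alphafil|_\ttK)=0$, and $R_{\ttI}(\alphafil)=R_{\ttJ}(\alphafil|_\ttJ)$, whence a direct computation gives the splitting
\[
\pttI^\mu(\efiltration;\varphi)=P_\ttJ^\mu(\efil,\alphafil|_\ttJ;\varphi)+P_\ttK^\mu(\efil,\alphafil|_\ttK;\varphi),\qquad \ttJ=\ttI\setminus\ttK,
\]
with $\ttJ,\ttK\subsetneq\ttI$ proper (properness uses that a nontrivial pivot exists; if none exists then $\pttI^\mu$ is linear in $\alphafil$ and one simply keeps the single index $i$ whose coefficient is smallest in the order). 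Because the order is total, $\pttI^\mu\ppreceq0$ forces one of the two summands to be $\ppreceq0$, which yields the desired proper destabilizing subfiltration.

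I expect the main obstacle to be exactly the passage from the scalar (slope) case to the polynomial one. Over $\qq$ one could slide along the full lineality $\bigcap_j\ker\ell_j$ and keep $\pttI^\mu$ \emph{exactly} constant; but over the ring of polynomials this lineality may be too small, so the argument must instead slide along $K'=\bigcap_j\ker\rho_j$ and carefully control the sign of the polynomial slope $L_0(v)$ — making this choice always possible, and isolating the single degenerate configuration that forces a genuine splitting, is the crux. A secondary technical point, but one that must be handled with care, is the restriction identity identifying $P_\ttJ^\mu$ with the restriction of $\pttI^\mu$ to a coordinate subspace, since it rests on the combinatorial comparison of the pivot sets of $M_\ttI$ and $M_\ttJ$ under rounding.
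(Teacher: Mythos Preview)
Your argument is correct and complete: the restriction identity holds for the reason you indicate (rounding each pivot entry of $M_\ttI$ up into $\overline{\ttJ}$ preserves nonvanishing of $\varphi$ and does not change the value of the partial sums once the $\ttJ^c$–weights are set to zero), the sliding along $K'=\bigcap_j\ker\rho_j$ works whenever the chosen direction has a negative coordinate, and your degenerate case is handled cleanly by the observation that $\rho_j(w)=0$ with $w\ge0$ and $n_{ji}\ge0$ forces $n_{ji}=0$ on $\ttK=\{i:w_i>0\}$, so that $R_\ttI(\alphafil)=R_\ttJ(\alphafil|_\ttJ)$ and the splitting $P_\ttI^\mu=P_\ttJ^\mu+P_\ttK^\mu$ follows. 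The properness of $\ttK$ is guaranteed because $w>0$ strictly would make every $\rho_j$ vanish identically and hence $K'=\rr^s$, contradicting $\dim K'=1$ when $s\ge2$.

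Your route, however, is genuinely different from the paper's. The paper argues geometrically: it views $f(\betafil)=\sum_i\beta_iC_i+r\delta\,\rttIa{\betafil}$ as a piecewise linear function, observes that $U_f^0=f^{-1}(0)$ is a piecewise linear hypersurface with at most $p$ linear pieces (one per region $V_k$ where a fixed pivot wins), and asserts that if the sublevel set $U_f\cap\rr^s_+$ avoided every coordinate hyperplane then $U_f^0$ would have to be confined to a pointed cone away from the boundary of the orthant, which would require at least $s-1$ linear pieces. The hypothesis $s\ge p+1$ then yields the contradiction. By contrast, you never analyse the shape of $\{f=0\}$; you locate the lineality $\bigcap_j\ker\rho_j$ directly, slide to the boundary, and isolate the single obstruction by a sign argument on the coefficients $n_{ji}$. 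Your approach has the advantage of making the polynomial-valued (Gieseker) case transparent: the only place the target order enters is in choosing between $v$ and $-v$ via $L_0(v)\ppreceq0$, and totality of the lex order on polynomials suffices. The paper's formulation writes $f:\rr^s\to\rr$ and speaks of $f^{-1}(0)$, which literally makes sense only in the slope case; your treatment avoids this ambiguity. On the other hand, the paper's picture explains conceptually \emph{why} the count $s>p$ is the right threshold (too few facets to wall off the orthant), whereas in your argument that threshold appears as the rank–nullity input $\dim K'\ge s-p\ge1$.
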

\begin{proof}
Without loss of generality we can suppose that $\ttI=\{1,\dots,s\}$. We prove the statement only for the (semi)stability, being the slope (semi)stable case the same. Consider the function
\begin{align*}
f:\rr^s & \longrightarrow\rr\\
\betafil & \longmapsto \betafil_\ttI C_\ttI+r\delta\rttIa{\betafil},
\end{align*}
where $\betafil_\ttI C_\ttI=\sum_{i\in\ttI}\beta_i C_i$. Clearly $f(\betafil)$ extends $\pttI^\mu(\efil,\betafil;\varphi)$ to all $\rr^s$ and coincide over the domain $\qq^s_+$ of $\pttI^\mu$. The function $\betafil_\ttI C_\ttI$ is an homogeneous linear polynomial in $\zz[\beta_1,\dots,\beta_s]$ while $\betafil\mapsto\rttIa{\betafil}$, being the maximum between homogeneous linear polynomials in the variables $\beta_1,\dots,\beta_s$, is piecewise linear. Therefore, $f(\betafil)$ is a continuous and piecewise linear function. It is linear in the regions $V_k=\{\betafil\in\rr^s \;|\; \rttIa{\betafil}(\pivot_k)\geq\rttIa{\betafil}(\pivot_l) \text{ for any }l=1,\dots,p\}$ where the maximum does not change. Note that $f(\ul{0})=0$ and $V_{kj}=V_k\cap V_j$ is a vector subspace of $\rr^s$ of dimension $s-1$, therefore $U_f^0=f^{-1}(0)$ is a piecewise linear hypersurface in $\rr^s$. We are looking for a \textit{positive} intersection of the set $U_f=\{\betafil\in\rr^s_+ \;|\; f(\betafil)\leq0\}$ with (at least) one of the coordinate hyperplanes $H_i^+=\{\beta_i=0\}\cap\rr^s_+$, where  $\rr^s_+=\{\betafil\in\rr^s \;|\; \beta_i\geq0 \;\forall i\in\ttI\}$. Indeed, if exists $\betafil'\in U_f\cap H_i^+$, then exists also a rational $\betafil$ in such intersection and the filtration $(\efil,\betafil)_\ttJ$ indexed by $\ttJ=\ttI\smallsetminus\{i\}$ is a proper subfiltration which destabilizes. So, if $U_f$ intersects a coordinate hyperplane, we are done. Suppose that this is not the case. If $U_f$ does not intersect any positive coordinate hyperplane then the restriction of $U_f^0$ to $\rr^s_+$ should be contained in a cone $W$ with vertex in zero which does not intersect any $H_i^+$, moreover the set $\{\betafil \;|\; f(\betafil)\leq0\}$ should be contained inside the space region bounded by $U_f^0$. The minimum number of linear hyperplanes necessary to define a piecewise linear hypersurface contained in a cone is $s-1$. Since $f(\betafil)$ is linear until it reaches any $V_{kj}$, $U_f^0$ could be contained in $W$ only if $p-1\geq s-1$. Therefore, if $s\geq p+1$, is not possible that $U_f^0$ is contained in $W$ and so $U_f$ intersect a positive coordinate hyperplane and we are done.
\end{proof}
\begin{remark}
Now we want to give an example to show that the previous bound for $s$ is sharp, that is, if $s=p$ then the statement of the theorem does not hold.

Let $0\subset E_1\subset E_2 \subset E_3\subset E$ be a filtration of $E$ and, as usual, we denote by $r$ and $d$ the rank and the degree of $E$ and by $r_i$ and $d_i$ the rank and degree of $E_i$; moreover, for convenience sake we assume that $r$ is a multiple of $3$. Eventually, we will fix $\ol{\delta}=1$, $a=4$ and we suppose that $c_1=-\frac{2}{3}r$, $c_2=-2r$, $c_3=-\frac{10}{3}r$. Moreover, assume that the pivots are $(1,1,r,r)$, $(2,2,2,r)$ and $(3,3,3,3)$.
In particular we get that $\kk{E_1}{E}=2$, $\kk{E_2}{E}=3$ and $\kk{E_3}{E}=4$, so the $k$-semistability conditions, $c_i+\kk{E_i}{E}\geq 0$, are satisfied for $i=1,2,3$. If we choose weights $\alpha_1=4$, $\alpha_2=2$ and $\alpha_3=6$ we get $\rttIa{\alphafil}=24$ and the semistability condition becomes
\begin{equation*}
(\alpha_1 c_1+\alpha_2c_2 +\alpha_3 c_3+24)r=\left(-\frac{8}{3}-4-20+24\right)r<0,
\end{equation*}
so that the filtration destabilizes.

Now we will calculate the semistability conditions for the length $2$ subfiltrations and we will show that they do not destabilize.

\begin{itemize}
\item Filtration indexed by $\{1,2\}$\\
\begin{itemize}
\item case $2\beta_1>\beta_2$
\begin{equation*}
 \left(-\frac{2}{3} \beta_1-2\beta_2+(2\beta_1+2\beta_2)\right)r = \left(\frac{4}{3}\beta_1\right)r \geq 0
\end{equation*}
\item case $2\beta_1<\beta_2$
\begin{equation*}
  \left(-\frac{2}{3} \beta_1-2\beta_2+(3\beta_2)\right)r = \left(\beta_2-\frac{2}{3}\beta_1\right)r \geq 0
\end{equation*}
\end{itemize}

\item Filtration indexed by $\{2,3\}$\\
\begin{itemize}
\item case $\beta_3>3\beta_2$
\begin{equation*}
 \left(-2\beta_2-\frac{10}{3}\beta_3+(4\beta_3)\right)r = \left(\frac{2}{3}\beta_3 -\beta_2\right)r \geq 0
\end{equation*}
\item case $\beta_3<3\beta_2$
\begin{equation*}
  \left(-2\beta_2-\frac{10}{3}\beta_3+(3\beta_2+3\beta_3)\right)r = \left(\beta_2-\frac{1}{3}\beta_3\right)r \geq 0
\end{equation*}
\end{itemize}

\item Filtration indexed by $\{1,3\}$\\
\begin{itemize}
\item case $\beta_1>\beta_3$
\begin{equation*}
 \left(-\frac{2}{3} \beta_1-\frac{10}{3}\beta_3+(2\beta_1+2\beta_3)\right)r = \left(\frac{4}{3}\beta_1-\frac{4}{3}\beta_3\right)r \geq 0
\end{equation*}
\item case $\beta_1<\beta_3$
\begin{equation*}
  \left(-\frac{2}{3} \beta_1-\frac{10}{3}\beta_3+(4\beta_3)\right)r = \left(\frac{2}{3}\beta_1-\frac{2}{3}\beta_3\right)r \geq 0
\end{equation*}
\end{itemize}
\end{itemize}
So there are no destabilizing length $2$ filtrations.\\

For a concrete example it is enough to consider the following rank $6$ vector bundle on a smooth curve $X$ 
\begin{equation*}
E=\amplebu{1}^{\oplus6}
\end{equation*}
and the filtration
\begin{equation*}
0\subset E_1\subset E_2\subset E_3\subset E,
\end{equation*}
where $E_1=\amplebu{1}$, $E_2=E_1\oplus\amplebu{1}^{\oplus2}$ and $E_3=E_2\oplus\amplebu{1}^{\oplus2}$.
Indeed, recalling that $c_i=dr_i-rd_i-a\ol{\delta}r_i$ (equation \eqref{eq-def-costanti-ci-grandi-e-piccole}), we get that the constant $c_1$ relative to $E_1$ is equal to $-4$, $c_2=-12$ and $c_3=-20$. Finally, define $L=\ox(m)$, with $m$ big enough such that $L(-4)$ has sections, and  denote by $V$ the last summand of $E$, that is, $E= E_3\oplus V$, then there exist nonzero morphisms 
\begin{itemize}
\item $\phi_1:E_1\otimes E_1 \otimes V\otimes V\to L$
\item $\phi_2:E_2/E_1\otimes E_2/E_1\otimes E_2/E_1 \otimes V \to L$
\item $\phi_3:E_3/E_2\otimes E_3/E_2 \otimes E_3/E_2 \otimes E_3/E_2\to L$,
\end{itemize}
define $\phi=\phi_1\oplus\phi_2\oplus \phi_3$, so that we have
$\kk{E_1}{E}=2$, $\kk{E_2}{E}=3$ and $\kk{E_3}{E}=4$. Hence, the tensor $(E,\phi_1\oplus\phi_2\oplus\phi_3,L)$ is not semistable and it cannot be destabilized by filtrations of length $2$ or $1$.
\end{remark}
\begin{proposition}
Let $(E,\varphi)$ be a tensor of type $(a,b,\Dsf)$ and $(\efiltration)_\ttI$ be a weighted filtration. If there exists an index $j\in\ttI$ such that $c_j=d r_j-r d_j-a\ol{\delta} r_j\geq0$ (resp. $C_j=\polhilb r_j-r \hilpol{E_j}-a\delta r_j\succeq0$) then there exist a proper subset $\ttJ\subset\ttI$ and a weight vector $\betafil\in\ttJ^{a}$ such that
\begin{equation*}
L^\mu_{\ttI}(\efiltration;\varphi)\geq L^\mu_{\ttJ}(\efil,\betafil;\varphi)
\end{equation*}
(resp. $P^\mu_{\ttI}(\efiltration;\varphi)\geq P^\mu_{\ttJ}(\efil,\betafil;\varphi)$), where $(\efil,\betafil)_\ttJ$ is the subfiltration indexed by $\ttJ$.
\end{proposition}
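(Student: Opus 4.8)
The plan is to produce the required subfiltration by simply \emph{deleting} the index $j$: I set $\ttJ\doteqdot\ttI\smallsetminus\{j\}$ and take $\betafil=(\beta_i)_{i\in\ttJ}$ to be the restriction of $\alphafil$, i.e. $\beta_i=\alpha_i$ for every $i\in\ttJ$. Since the $\alpha_i$ are positive rationals, $\betafil$ is again a legitimate weight vector and $(\efil,\betafil)_\ttJ$ is a genuine proper subfiltration. I will carry out the slope statement, the polynomial one being identical after replacing $c_i$ by $C_i$, $\ol\delta$ by $\delta$ and $\geq$ by $\succeq$.

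Using the reformulation $L^\mu_\ttI(\efiltration;\varphi)=\sum_{i\in\ttI}\alpha_i c_i+r\ol\delta\,\rttIa{\alphafil}$ established in Section~\ref{sec-associated-matrix}, together with the analogous expression for $\ttJ$, the quantity I must control is
\begin{equation*}
L^\mu_\ttI(\efiltration;\varphi)-L^\mu_\ttJ(\efil,\betafil;\varphi)=\alpha_j c_j+r\ol\delta\bigl(\rttIa{\alphafil}-\rttJa{\betafil}\bigr).
\end{equation*}
The first summand is $\geq0$ because $\alpha_j>0$ and $c_j\geq0$ by hypothesis, while $r\ol\delta>0$; hence the whole problem reduces to the single inequality $\rttIa{\alphafil}\geq\rttJa{\betafil}$, that is, to showing that deleting an index cannot increase the quantity $R$.

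The heart of the argument is the observation that $M_\ttJ(\efil;\varphi)$ is precisely the restriction of $M_\ttI(\efil;\varphi)$ to the index set $\ttJ\cup\{r\}\subseteq\ttIbar$: the $(i_1,\dots,i_a)$-entry of either matrix is $1$ exactly when $\rest{\varphi}{(E_{i_1}\diamond\dots\diamond E_{i_a})^{\oplus b}}\neq0$, a condition that does not depend on the ambient index set. Consequently every pivot of $M_\ttJ$ is an ordered $a$-tuple with entries in $\ttJ\cup\{r\}\subseteq\ttIbar$ at which the corresponding entry of $M_\ttI$ equals $1$, hence it belongs to $\aupleset_1$ computed for $\ttI$. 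Choosing a pivot $\pivot'$ realizing the maximum $\rttJa{\betafil}=\rttJa{\betafil}(\pivot')$, I compare the defining sums coordinate by coordinate: for each $t$ one has $\sum_{s\geq\pivot'_t,\,s\in\ttI}\alpha_s\geq\sum_{s\geq\pivot'_t,\,s\in\ttJ}\alpha_s$, the two sums differing at most by the nonnegative term $\alpha_j$ (which occurs on the left precisely when $j\geq\pivot'_t$). Summing over $t=1,\dots,a$ yields $\rttIa{\alphafil}(\pivot')\geq\rttJa{\betafil}(\pivot')$, and since $\pivot'\in\aupleset_1$ for $\ttI$ while $\rttIa{\alphafil}$ is the maximum of $\rttIa{\alphafil}(\cdot)$ over all such tuples, I conclude $\rttIa{\alphafil}\geq\rttIa{\alphafil}(\pivot')\geq\rttJa{\betafil}(\pivot')=\rttJa{\betafil}$.

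I expect the only genuinely delicate point to be the identification of $M_\ttJ$ with the restriction of $M_\ttI$, and the ensuing containment of the pivot set of $M_\ttJ$ inside $\aupleset_1$ for $\ttI$; everything else is the elementary monotonicity of $\rttIa{\cdot}$ in the weights, it being a maximum of tail-sums of the $\alpha_s$ with nonnegative coefficients. Once the inequality $\rttIa{\alphafil}\geq\rttJa{\betafil}$ is in hand, the displayed identity forces $L^\mu_\ttI(\efiltration;\varphi)\geq L^\mu_\ttJ(\efil,\betafil;\varphi)$, which is exactly the claim, and the polynomial version follows verbatim.
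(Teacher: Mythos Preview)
Your proof is correct and follows the same construction as the paper: take $\ttJ=\ttI\smallsetminus\{j\}$ with the restricted weights, and reduce everything to the monotonicity $\rttIa{\alphafil}\geq\rttJa{\betafil}$. In fact your treatment of that inequality is more careful than the paper's; the paper only records the subadditivity $\rttI\leq R_\ttJ+R_\ttK$ for disjoint $\ttJ,\ttK$ and then jumps to the conclusion, whereas what is actually needed (and what you prove explicitly via the identification of $M_\ttJ$ as a restriction of $M_\ttI$ and the coordinate-wise comparison of tail sums) is the lower bound $\rttIa{\alphafil}\geq\rttJa{\betafil}$.
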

\begin{proof}
We prove the Proposition only for the slope semistability, because the proof in the the semistability case is the same.

It is easy to see that for any pair of disjoint subsets $\ttJ,\ttK$ of $\ttI$, $\rttI\leq R_\ttJ+R_\ttK$. Therefore, $L^\mu_{\ttI}(\efiltration;\varphi)=\sum_{i\in\ttI}\alpha_i c_i+r\ol{\delta}\rttIa{\alphafil}\geq \sum_{i\in\ttI,i\neq j}\alpha_i c_i+r\ol{\delta} R_{\ttJ,\alphafil'}$, where $\ttJ=\ttI\smallsetminus\{j\}$ and $\alphafil'=(\alpha_i)_{i\in\ttJ}$.
\end{proof}
Let $(E,\varphi)$ be a tensor of type $(a,b,\Dsf)$, $(\efiltration)_\ttI$ be a weighted filtration and let $M_\ttI(\efiltration;\varphi)=\{\pivot_1,\dots,\pivot_p\}$ be the associated matrix. Define
\begin{equation*}
V_{k}=\{\betafil\in\rr^s \;|\; \rttIa{\betafil}(\pivot_k)\geq\rttIa{\betafil}(\pivot_h), \; h=1,\dots,p\}.
\end{equation*}
If $\ttJ$ is a subset of $\ttI$, we denote by $V_\ttJ=\bigcap_{i\in\ttJ}V_j$. For any $k=1,\dots,p$, $\rttIa{\betafil}(\pivot_k)=\sum_{i\in\ttI}x_{ki}\beta_i$ is a linear polynomial in $\zz[\beta_1,\dots,\beta_s]$ such that $0\leq x_{kj}\leq x_{kj+1}$ for any $j\in\ttI$. This implies that $V_k$ is a linear (closed) subspace of dimension $s$ of $\rr^s$.

If the maximum of the filtration $(\efiltration)_{\ttI}$ is achieved at the pivot $\pivot_j$ we have a linear system of inequalities $\{S_i\}_{i=1,\dots,p-1}$ of the form  $\rttIa{\alphafil}(\pivot_j)\geq \rttIa{\alphafil}(\pivot_k)$ for $k\neq j$. We will denote the linear system of equations associated to $\{S_i\}_{i=1,\dots,p-1}$ by $\{\overline{S}_i\}_{i=1,\dots,p-1}$. We will write 
$S_i(\alphafil)\geq 0$ or $\overline{S}_i(\alphafil)=0$, if we want to stress which weight vector we are considering.
\begin{proposition}
Let $(E,\varphi)$ be a tensor of type $(a,b,\Dsf)$ and $(\efiltration)_\ttI$ be a weighted filtration. Using the notation introduced above, if $V_\ttI\cap\rr^s_+\neq\{0\}$ then the filtration splits.
\end{proposition}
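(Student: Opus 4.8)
The plan is to exploit the fact that on the cone $V_\ttI$ all pivots are simultaneously maximizing, so that the piecewise-linear slope function becomes genuinely linear and hence additive. Following the proof of Theorem~\ref{teo-main}, write $f(\betafil)=\sum_{i\in\ttI}\beta_i C_i+r\delta\,\rttIa{\betafil}$ for the piecewise-linear extension of $\pttI^\mu$ to $\rr^s$, and recall that on the region $V_k$ one has $\rttIa{\betafil}=\rttIa{\betafil}(\pivot_k)=\sum_{i\in\ttI}x_{ki}\beta_i$, so there $f$ equals the single linear functional $\ell_k(\betafil)=\sum_{i\in\ttI}(C_i+r\delta\,x_{ki})\beta_i$. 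First I would pick a nonzero $\betafil^*\in V_\ttI\cap\rr^s_+$, which exists by hypothesis. Because $\betafil^*\in V_\ttI=\bigcap_{k=1}^p V_k$, the equalities $\rttIa{\betafil^*}(\pivot_1)=\dots=\rttIa{\betafil^*}(\pivot_p)$ hold, so all the $\ell_k$ restrict to one and the same linear functional $\ell$ along $V_\ttI$; thus $f$ is linear on the whole cone $V_\ttI$, and this is the feature that makes a splitting possible.

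Next I would build the two halves of the splitting by decomposing $\betafil^*$ (rather than the given weight $\alphafil$). I would choose two proper subsets $\ttJ,\ttK\subsetneq\ttI$ with $\ttJ\cup\ttK=\ttI$ and write $\betafil^*=\betafil+\zetafil$, where $\betafil\in\rr^{|\ttJ|}_+$ is supported on $\ttJ$ and $\zetafil\in\rr^{|\ttK|}_+$ on $\ttK$; linearity of $\ell$ on $V_\ttI$ then gives $\ell(\betafil^*)=\ell(\betafil)+\ell(\zetafil)$. To turn this into the required identity for sub-filtrations I would verify the compatibility lemma that, for a weight vector supported on $\ttJ$, the maximum defining $\rttIa{\cdot}$ is already computed by the pivots of the restricted matrix $M_\ttJ(\efil;\varphi)$, so that $f$ evaluated on such a vector coincides with $P^\mu_\ttJ(\efil,\cdot\,;\varphi)$, and symmetrically for $\ttK$. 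Feeding these two identifications into the additivity of $\ell$ produces $\pttI^\mu(\efil,\betafil^*;\varphi)=P^\mu_\ttJ(\efil,\betafil;\varphi)+P^\mu_\ttK(\efil,\zetafil;\varphi)$, the splitting identity for the weighted filtration $(\efil,\betafil^*)_\ttI$; when the given weights already satisfy $\alphafil\in V_\ttI$ the same decomposition applied to $\alphafil$ yields the statement for $(\efiltration)_\ttI$ itself.

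The delicate step, and the one I expect to be the real obstacle, is the decomposition $\betafil^*=\betafil+\zetafil$: I must arrange it so that each summand still lies in the linear locus (so that $\ell(\betafil)=f(\betafil)=P^\mu_\ttJ$ and likewise for $\zetafil$) while keeping the supports $\ttJ,\ttK$ proper and covering $\ttI$. This is not free, because restricting $\betafil^*$ to a proper set of coordinates can push it out of $V_\ttI$; it works precisely when $V_\ttI\cap\rr^s_+$ already contains a vector with a vanishing coordinate, which furnishes at once a proper support and a point inside the linear region. I would therefore analyze the cone $V_\ttI\cap\rr^s_+$ through the linear system $\{\ol{S}_i\}_{i=1,\dots,p-1}$ that cuts it out, whose solution space has $\dim V_\ttI$ governed by the number of independent equations among the $\ol{S}_i$, and locate a boundary ray meeting some coordinate hyperplane $\{\beta_{i_0}=0\}$. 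Pinning down when such a ray exists is the crux of the argument and the point requiring the most care.
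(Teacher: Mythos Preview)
Your argument has a genuine gap: the definition of ``splits'' requires that $\pttI^\mu(\efiltration;\varphi)$, with the \emph{original} weight vector $\alphafil$, decompose as a sum over two proper subfiltrations. You only establish the additive identity for an auxiliary vector $\betafil^*\in V_\ttI$, and you yourself concede that the conclusion for $(\efiltration)_\ttI$ follows only ``when the given weights already satisfy $\alphafil\in V_\ttI$''. The hypothesis of the proposition is merely that $V_\ttI\cap\rr^s_+\neq\{0\}$; nothing forces $\alphafil$ to lie there, so as written your argument proves that some \emph{other} weighted filtration splits, not the given one.

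The paper's key manoeuvre is precisely what bridges this gap. One picks $\zetafil\in V_\ttI\cap\rr^s_+$ small enough that $\betafil:=\alphafil-\zetafil\in\rr^s_+$, and decomposes $\alphafil=\betafil+\zetafil$. The crucial point is that membership of $\zetafil$ in $V_\ttI=\bigcap_k V_k$ means $\zetafil$ is annihilated by \emph{every} wall equation $\ol{S}_l$; consequently $S_l(\betafil)=S_l(\alphafil)-S_l(\zetafil)=S_l(\alphafil)\ge 0$ for all $l$, so $\betafil$ achieves its maximum at the \emph{same} pivot as $\alphafil$. This is what yields $\rttIa{\alphafil}=\rttIa{\betafil}+\rttIa{\zetafil}$ and hence the required splitting of $\pttI^\mu(\efiltration;\varphi)$, even though $\alphafil$ itself sits in only one chamber $V_k$. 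Your observation that $f$ is linear on $V_\ttI$ is correct and is used, but it is applied to $\zetafil$, not to $\alphafil$; you have the roles of the two summands reversed. Properness of the index sets is then arranged, as in the paper, by additionally imposing $\zeta_i=0$ and $\zeta_j=\alpha_j$ for suitable $i\neq j$ within the solution space of the $\ol{S}_l$.
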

\begin{proof}
Suppose that the maximum is attained in $\pivot_k$, i.e. for any $j=1,\dots,p$ the following inequality holds $\rttIa{\alphafil}(\pivot_k)\geq\rttIa{\alphafil}(\pivot_j)$. Let us consider the linear system  of inequalities $\{S_l\}_{l=1,\dots,p,\;l\neq k}=\llgraffe\rttIa{\alphafil}(\pivot_k)-\rttIa{\alphafil}(\pivot_j)\geq0\rrgraffe_{l=1,\dots,p,\;l\neq k}$ associated to the filtration  $(\efiltration)_{\ttI}$ and the associated linear system of equations $\{\overline{S}_l\}_{l=1,\dots,p,\;l\neq k}$. By hypothesis, the vector subspace
\begin{equation*}
V_\ttI=\{\betafil\in\rr^s \;|\; \overline{S}_l(\betafil)=0 \text{ for any }l=1,\dots,p, \; l\neq k\} 
\end{equation*}
of $\rr^s$ intersect $\rr^s_+$, so we can choose $\zetafil\in V_\ttI\cap\rr^s_+$ such that $\betafil:=\alphafil-\zetafil\in\rr^s_+$. If the vector $\alphafil$ satisfies a linear inequality $S_l(\alphafil)\geq 0$ then we have 
\begin{equation*}
S_l(\betafil)= S_l(\alphafil- \zetafil)=S_l(\alphafil)-S_l(\zetafil)=S_l(\alphafil)\geq 0, 
\end{equation*}
since $S_l$ is linear and $\zetafil$ is a solution for $\overline{S}_l$.
Because of this, the linear system associated to the filtration $(\efil,\betafil)_{\ttI}$ is the same of the one associated to $(\efiltration)_{\ttI}$, id est, the maximum for the filtration  $(\efil,\betafil)_{\ttI}$ is achieved in the same pivot as for the filtration  $(\efiltration)_{\ttI}$. Moreover, the relation $\beta_i:= \alpha_i- \zeta_i$ implies 
\begin{equation*}
\sum_{i\in\ttI} \alpha_i c_i=\sum_{i\in\ttI} \beta_i c_i +\sum_{i\in\ttI} \zeta_i c_i.
\end{equation*}
So we need to show that
\begin{equation}\label{eqR}
\rttIa{\alphafil}=\rttIa{\betafil}+\rttIa{\zetafil}.
\end{equation}
after replacing, for all $i\in\ttI$, $\beta_i$ with $\alpha_i- \zeta_i$, the terms $\alpha_i$ will delete each others, hence Equation (\ref{eqR}) becomes an equation in the variable $\zeta_i$ which is equivalent to ${\overline{S}_{l_0}}$ for a certain $l_0\in\{1,\dots,p\}$.

Finally, in order to get a decomposition in two proper filtrations we need to add the conditions $\zeta_i=0$, and $\zeta_j=\alpha_j$ (that is, $\beta_j=0$) for some $i\neq j$. At the end, we have a system with $p+1$ equations in $s$ variables which admits a non-trivial solution if $s\geq p+1$.
\end{proof}

\begin{proposition}\label{cor-un-pivot-slope-ss}
 Let $(E,\varphi)$ be a tensor of type $(a,b,\Dsf)$, $(\efiltration)_\ttI$ be a weighted filtration with $\ttI=\{i_1,\dots,i_s\}$ and suppose that the associated matrix $M_\ttI(\efil;\varphi)=\{\pivot\}$ has only one pivot. Then
\begin{equation*}
L_\ttI^\mu(\efiltration;\varphi) = \sum_{i\in\ttI} L_{\{i\}}^\mu(0\subset E_i\subset E, \alpha_i;\varphi)
\end{equation*}
and
\begin{equation*}
P_\ttI^\mu(\efiltration;\varphi) = \sum_{i\in\ttI} P_{\{i\}}^\mu(0\subset E_i\subset E, \alpha_i;\varphi)
\end{equation*}
\end{proposition}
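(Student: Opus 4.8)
The plan is to reduce the stated identity, via the closed formulas for $L^\mu$ and $P^\mu$ established in Section~\ref{sec-associated-matrix}, to a single combinatorial identity relating the unique pivot $\pivot$ to the numbers $\kk{E_i}{E}$. I treat only the slope case, the polynomial case being identical after replacing $c_i$, $\deg$ and $\ol{\delta}$ by $C_i$, $\hilpol{\cdot}$ and $\delta$.

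First I would expand both sides. Using $L_\ttI^\mu(\efiltration;\varphi)=\sum_{i\in\ttI}\alpha_i c_i+r\ol{\delta}\rttIa{\alphafil}$ and the fact that $M_\ttI(\efil;\varphi)=\{\pivot\}$ has a single pivot, the maximum defining $\rttIa{\alphafil}$ is attained at $\pivot$, so $\rttIa{\alphafil}=\rttIa{\alphafil}(\pivot)$. For each length-one filtration, the remark on filtrations of length one gives $\mu(0\subset E_i\subset E,1;\varphi)=r\,\kk{E_i}{E}-a\,r_i$, whence, by homogeneity of $\mu$ in the weight, $L_{\{i\}}^\mu(0\subset E_i\subset E,\alpha_i;\varphi)=\alpha_i c_i+r\ol{\delta}\,\kk{E_i}{E}\,\alpha_i$. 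Since the linear contributions $\sum_{i\in\ttI}\alpha_i c_i$ agree on both sides, the Proposition reduces to proving
$$\rttIa{\alphafil}(\pivot)=\sum_{i\in\ttI}\kk{E_i}{E}\,\alpha_i.$$

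Next I would rewrite the left-hand side by interchanging the order of summation: from $\rttIa{\alphafil}(\pivot)=\sum_{j=1}^a\bigl(\sum_{s\geq\pivot_j,\,s\in\ttI}\alpha_s\bigr)$, each weight $\alpha_s$ with $s\in\ttI$ is counted once for every coordinate $j$ with $\pivot_j\leq s$, so $\rttIa{\alphafil}(\pivot)=\sum_{s\in\ttI}\alpha_s\,\#\{j:\pivot_j\leq s\}$. It therefore suffices to show that $\#\{j:\pivot_j\leq s\}=\kk{E_s}{E}$ for every $s\in\ttI$.

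This last identity is the crux, and it is exactly where the single-pivot hypothesis enters. Since $\pivot$ is the unique $\curlyeqprec$-maximal element of $\aupleset_1$ and $\aupleset_1$ is a down-set for $\curlyeqprec$ (by the monotonicity remark following the definition of the matrix), finiteness of $\aupleset$ forces $\aupleset_1=\{\underline{i}\in\aupleset:\underline{i}\curlyeqprec\pivot\}$. By definition $\kk{E_s}{E}$ is the largest $k$ for which the ordered tuple $(\underbrace{s,\dots,s}_{k},\underbrace{r,\dots,r}_{a-k})$ lies in $\aupleset_1$; comparing this tuple with $\pivot$ coordinatewise (recall that $\curlyeqprec$ reverses the numerical order, so the relation reads $\pivot_j\leq s$ on the first $k$ coordinates) shows that membership holds precisely when $\pivot_k\leq s$. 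As $\pivot_1\leq\dots\leq\pivot_a$, the largest such $k$ equals $\#\{j:\pivot_j\leq s\}$, which gives the required identity and hence the Proposition. The only delicate point is the reduction $\aupleset_1=\{\underline{i}:\underline{i}\curlyeqprec\pivot\}$, which genuinely fails once there is more than one pivot; everything else is a bookkeeping of the two sums.
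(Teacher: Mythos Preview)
Your argument is correct and is essentially the paper's own proof, only phrased more structurally: where the paper fixes $k$ with $p_j\leq k<p_{j+1}$ and argues by contradiction that $R_{\{k\},\alpha_k}=j\alpha_k$ (otherwise a second pivot would appear), you first isolate the down-set description $\aupleset_1=\{\underline{i}:\underline{i}\curlyeqprec\pivot\}$ and then read off $\kk{E_s}{E}=\#\{j:\pivot_j\leq s\}$ directly. The reduction to $\rttIa{\alphafil}(\pivot)=\sum_{s\in\ttI}\kk{E_s}{E}\,\alpha_s$ and the coefficient comparison are identical in both proofs.
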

\begin{proof}
We will prove the theorem for the slope semistability, being the semistability case the same. Let us assume that $\pivot=(p_1,\dots,p_a)\in\ttIbar^{a}$, then
\begin{equation*}
 \rttIa{\alphafil}=\rttIa{\alphafil}(\pivot)=\alpha_{p_1}+\dots+\alpha_{p_2-1}+2\alpha_{p_2}+\dots+(a-1)\alpha_{p_a-1}+a\alpha_{p_a}+\dots+a\alpha_{i_s}.
\end{equation*}
Let $k$ be an index in $\ttI$ such that $p_j\leq k < p_{j+1}$, then
\begin{equation*}
 R_{\{k\},\alpha_k}=j,
\end{equation*}
indeed, if $R_{\{k\},\alpha_k}=l>j$ then $m_{\underbrace{k,\dots,k}_{l\text{-times}},\underbrace{r,\dots,r}_{(r-l)\text{-times}}}\neq0$ and so $(\underbrace{k,\dots,k}_{l\text{-times}},\underbrace{r,\dots,r}_{(r-l)\text{-times}})$ would be another pivot. Therefore,
\begin{equation*}
\rttIa{\alphafil}=\sum_{k\in\ttI}R_{\{k\},\alpha_k}
\end{equation*}
and we are done.
\end{proof}

\section{Combinatorial considerations}\label{sec-combinatorial-considerations}
In this section we will treat some combinatorial problems related to the study of the semistability condition of tensors. Fix $\ttI=\{1,\dots,s\}$ (since we are interested in combinatorial problems we can assume, without loss of generality, that $\ttI$ consists of the first $s$ natural numbers), let us denote by $\ttIbar=\ttI\cup\{s+1\}$ and $t=|\ttIbar|$. We are interested in the cardinality of the set $\aupleset=\{(i_1,\dots,i_a)\in\ttIbar^{a} \,|\, i_1\leq\dots\leq i_a\}$.\\

Let $n,k$ be integers, denote with $\pkn{k}{n}$ the number of partitions of $n$ into exactly $k$ parts. Then
\begin{equation*}
\begin{cases}
 \pkn{k}{n} & = \pkn{k}{n-k}+\pkn{k-1}{n-1}\\
 \pkn{0}{0} & = 1\\
 \pkn{k}{n} & = 0 \text{ if }n\leq0\text{ or }k\leq0.
\end{cases}
\end{equation*}
Is a well-known fact that $\pkn{1}{n}=1$ and that $\pkn{2}{n}=\lfloor\frac{n}{2}\rfloor$.

\begin{lemma}\label{lemma-somma-uguale-non-in-relazione}
 Let $\ttI$ and $\aupleset$ be as above and let $\underline{i}=(i_1,\dots,i_a)$ and $\underline{j}=(j_1,\dots,j_a)$ be two distinct elements of $\aupleset$. If $\sum_{l=1}^a i_l = \sum_{l=1}^a j_l$ then $\underline{i}$ and $\underline{j}$ are incomparable (see Definition \ref{def-partial-order}).
\end{lemma}
\begin{proof}
 By hypothesis $\sum_{l=1}^a i_l = \sum_{l=1}^a j_l$ and $(i_1,\dots,i_a)\neq(j_1,\dots,j_a)$. Therefore, there exist two indexes $l$ and $l'$ such that $i_l\leq j_l$ and $i_{l'}\geq j_{l'}$.
\end{proof}

\begin{theorem}\label{teo-max-num-massimali}
 Let $\ttI$ and $\aupleset$ be as above and fix a subset $B$ of $\aupleset$ and let $B^\sharp$ be the subset of $\curlyeqprec$-maximal elements of $B$. Denote by $\maxp{a,t}=\max_{B\subseteq\aupleset}\{|B^\sharp|\}$, where $|\cdot|$ denotes the cardinality of a finite set. Then $\maxp{1,t}=1$, $\maxp{2,t}=\pkn{2}{t+1}$, otherwise
 \begin{equation}
  \maxp{a,t} = \max_{a\leq x\leq at}f_{a,t}(x),
 \end{equation}
 where $f_{a,t}:[a,at]\to\nn$ is defined as follows
 \begin{align}\label{eq-function}
  f_{a,t}(x) = & \;\pkn{a}{x}-\sum_{k=t+1}^{x-a+1} f_{a-1,k}(x-k),\\
  f_{a,1}(x) = & 
   \begin{cases}
    1 \text{ if } x=a\geq1,\\
    0 \text{ otherwise,}
   \end{cases}\nonumber\\ \nonumber
  f_{1,t}(x) = &
   \begin{cases}
    1 \text{ if } 1\leq x\leq t,\\
    0 \text{ otherwise.}
   \end{cases}
 \end{align}
\end{theorem}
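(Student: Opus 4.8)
The plan is to reinterpret $h_{a,t}$ as the size of the largest antichain of the poset $(\aupleset,\curlyeqprec)$, to identify $f_{a,t}(x)$ with a partition count, and then to locate the optimal antichain on a single level set of fixed coordinate sum. The first reduction is immediate: for any $B\subseteq\aupleset$ the set $B^\sharp$ of $\curlyeqprec$-maximal elements is an antichain, while every antichain $A$ satisfies $A^\sharp=A$; hence $h_{a,t}=\max\{|A| : A\subseteq\aupleset\text{ an antichain}\}$, and the theorem becomes a Sperner-type statement for the coordinatewise order on weakly increasing $a$-tuples with entries in $\ttIbar=\{1,\dots,t\}$.

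Next I would pin down $f_{a,t}(x)$ combinatorially. Write $g_{a,t}(x)$ for the number of partitions of $x$ into exactly $a$ parts, each at most $t$ --- equivalently, the number of $\underline{i}=(i_1,\dots,i_a)\in\aupleset$ with $i_1+\dots+i_a=x$. Conditioning on the value $k$ of the largest part and deleting one such part sets up a bijection with partitions of $x-k$ into $a-1$ parts each at most $k$, giving $\pkn{a}{x}=\sum_{k\ge1}g_{a-1,k}(x-k)$ and $g_{a,t}(x)=\sum_{k=1}^{t}g_{a-1,k}(x-k)$. Subtracting these, and noting that the largest part cannot exceed $x-a+1$, reproduces the recursion \eqref{eq-function}; comparing the boundary values with $f_{a,1}$ and $f_{1,t}$, an induction on $a$ yields $f_{a,t}=g_{a,t}$.

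For the lower bound $h_{a,t}\ge\max_x f_{a,t}(x)$ I would apply Lemma \ref{lemma-somma-uguale-non-in-relazione}: for each fixed $x$ the level set $\{\underline{i}\in\aupleset : i_1+\dots+i_a=x\}$ is an antichain of cardinality $f_{a,t}(x)$, and the extreme sums $a$ and $at$ give the range $a\le x\le at$. The reverse inequality is the heart of the matter and amounts to the assertion that $(\aupleset,\curlyeqprec)$, graded by the coordinate sum, has the Sperner property, i.e.\ that some largest antichain sits inside a single rank. The cleanest route is to recognize this poset, up to reversal of the order, as the quotient $\{1,\dots,t\}^{a}/S_a$ of a product of chains by the coordinate-permuting $S_a$-action: a standard rearrangement (sorting) argument shows that the induced order on multisets is exactly $\curlyeqprec$.

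Since a product of chains is a symmetric chain order, hence rank-unimodal and strongly Sperner (a Peck poset), and quotients of Peck posets are again Peck, the poset $(\aupleset,\curlyeqprec)$ is rank-unimodal with the Sperner property; therefore its largest antichain has size equal to the largest level $\max_{a\le x\le at}f_{a,t}(x)$, completing both the general formula and, by direct evaluation, the special values $h_{1,t}=1$ and $h_{2,t}=\pkn{2}{t+1}$. I expect this Sperner/unimodality input for the partition poset to be the main obstacle: everything else is elementary counting, whereas this step genuinely needs the theory of Peck posets (or, if a self-contained proof is preferred, the explicit construction of order-matchings between consecutive levels up to the middle one).
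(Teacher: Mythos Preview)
Your proposal is correct, and it diverges from the paper precisely at the upper bound $h_{a,t}\le\max_x f_{a,t}(x)$. Both arguments make the same reductions: they identify $h_{a,t}$ with the width of $(\aupleset,\curlyeqprec)$, observe (via Lemma~\ref{lemma-somma-uguale-non-in-relazione}) that each level set $\{\underline{i}:\Sigma\underline{i}=x\}$ is an antichain, and show that $f_{a,t}(x)$ counts these level sets --- the paper does this in its second Claim by the same deletion-of-largest-part recursion you use.

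For the upper bound the paper proceeds by a direct greedy/inductive argument (its first Claim): starting from a fixed $\underline{i}$ it argues, layer by layer in the first coordinate and by induction on $a$, that any antichain containing $\underline{i}$ cannot exceed the level set $\{\underline{j}:\Sigma\underline{j}=\Sigma\underline{i}\}$, because adding an element with strictly larger sum would be comparable to something already present, and one with strictly smaller sum would ``miss'' an element in between. Your route is structurally different: you recognise $(\aupleset,\curlyeqprec)$ as (the dual of) the quotient $[t]^a/S_a$ and import the Sperner property from the Peck machinery (products of chains are Peck, and quotients by rank-preserving group actions remain Peck, \`a la Stanley). The paper's argument is self-contained but rather informal at this step; your argument is clean and rigorous once the Peck-poset input is granted, and it also yields rank-unimodality (equivalently, unimodality of $x\mapsto f_{a,t}(x)$) for free, which the paper establishes separately via the Gaussian binomial in Theorem~\ref{teo-f-and-the-quantic-binomial} and its corollary.
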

\begin{proof}
 If $a=1$ there is nothing to prove. If $a=2$ the maximum is clearly attained by taking $B$ as the set of anti-diagonal elements, that is $B=\{(1,r),(2,r-2),\dots,(\frac{t}{2},\frac{t}{2}+1)\}$ if $t$ is even and $B=\{(1,r),(2,r-2),\dots,(\frac{t+1}{2},\frac{t+1}{2})\}$ if $t$ is odd. In any case any element of $B$ is $\curlyeqprec$-maximal, that is $B=B^\sharp$, and the cardinality of $B$ is $\lfloor\frac{t+1}{2}\rfloor$ which is exactly $\pkn{2}{t+1}$.\\
 
 Let $a \geq 3$. We want to construct a subset, $B$ of $\aupleset$ such that $B=B^\sharp$ and which realizes the maximum. The idea for constructing $B$ is to start from an $a$-tuple $\underline{i_1}\in\aupleset$, then adding $\underline{i_2}\in\aupleset$ such that $\underline{i_1}\not\sim\underline{i_2}$, then $\underline{i_3}\in\aupleset$ such that $\underline{i_3}\not\sim\underline{i_1}$ and $\underline{i_3}\not\sim\underline{i_2}$ and so on, until is not possible to add any other $a$-tuple which is incomparable with all the previous ones. In this way we get a set $B$ with the property that $B=B^\sharp$, but clearly the cardinality of $B$ depends on the choices made.\\
 
 \textbf{Claim.} \textit{Starting from an $a$-tuple $\underline{i}=(i_1,\dots,i_a)$, in order to construct a set $B$ which is the largest possible, the best choice for adding an $a$-tuple (which is incomparable with the previous ones) is by adding an $a$-tuple $\underline{j}=(j_1,\dots,j_a)$ such that $\sum_{l=1}^a i_l = \sum_{l=1}^a j_l$.\\}
 
 Indeed, thanks to Lemma \ref{lemma-somma-uguale-non-in-relazione} the added $a$-tuples are incomparable with each other. Therefore, we have to prove that the set $B$ constructed in this way is bigger than any other set constructed starting from the same $a$-tuple $\underline{i}=(i_1,\dots,i_a)$. Without loss of generality we can suppose that $i_1 = 1$.  We will prove the claim by induction on $a$. If $a=1$ there is nothing to prove. Suppose we proved the claim for $a-1$. Starting from $\underline{i}$, because of the inductive hypothesis, the best way in order to fill the first layer is adding all $a$-tuples having the first coordinate equal to $1$ and the same sum of $\underline{i}$ . An $a$-tuple $\underline{j}=(j_1,\dots,j_a)$ such that $\sum_{l=1}^a j_l > \sum_{l=1}^a i_l$ is $\curlyeqprec$ to at least one of the $a$-tuples already added. Thefore, the only possibility is adding an $a$-tuple $\underline{j}$ with $\sum_{l=1}^a j_l \leq \sum_{l=1}^a i_l$. If the inequality is strict, it is easy to see that we are missing at least an $a$-tuple between $\underline{j}$ and one of the previous ones.\\
 
 Fix an $a$-tuple $\underline{i}$, then we construct a set $B_{\underline{i}}\subset\aupleset$ as the set containing all $\underline{j}\in\aupleset$ such that $\sum_{l=1}^{a} i_l = \sum_{l=1}^{a} j_l$. By Lemma \ref{lemma-somma-uguale-non-in-relazione} we have that $B_{\underline{i}}=B_{\underline{i}}^\sharp$; moreover, by the previous claim $\max_{B\subset\aupleset}\{|B^\sharp|\} = \max_{\underline{i}\in\aupleset} \{ |B_{\underline{i}}|\}$.\\
 
 \textbf{Claim.} \textit{Let $\underline{i}=(i_1,\dots,i_a)$ be an $a$-tuple and let $x=\sum_{l=1}^a i_l$, then $|B_{\underline{i}}|=f_{a,t}(x)$.\\}
 
 We prove the claim by induction on $a$. If $a=0$ there is nothing to prove. Assuming that the claim holds true for any $a'\leq a-1$, we prove for $a$. Since $B_{\underline{i}}$ contains all $a$-tuples $\underline{j}=(j_1,\dots,j_a)$ such that $\sum_{l=1}^a j_l=x$ then the cardinality of $B_{\underline{i}}$ coincides with the number $a$-partitions of $x$ minus the partitions involving numbers greater or equal than $t+1$. The cardinality of the former set is exactly $\pkn{a}{x}$ while the cardinality of the latter can be calculated as follows. The integers greater or equal than $t+1$ appearing in the partitions are the integers between $t+1$ and $x-(a-1)$. So the number of $a$-partitions of $x$ involving numbers greater or equal than $t+1$ is equal to the sum (over $k$) of $(a-1)$-partitions of $x-k$ involving numbers less or equal than $k$ which, by the inductive hypothesis, is equal to $f_{a-1,k}(x-k)$.\\
 
By the previous claims, we have that
 \begin{equation*}
 \max_{B\subset\aupleset}\{|B^\sharp|\} = \max_{\underline{i}\in\aupleset} \{ |B_{\underline{i}}|\}=\max_{\underline{i}\in\aupleset} \left\{f_{a,t}\left(\sum_{l=1}^a i_l\right)\right\}=\max_{a\leq x\leq at} \{f_{a,t}(x)\}.
 \end{equation*}
\end{proof}

\begin{definition}
Let $n$ be a natural number, then define
\begin{equation*}
n_q =
\begin{cases}
1+q+q^2+\dots+q^{n-1} \text{ if } n\neq0\\
0 \text{ otherwise.}
\end{cases}
\end{equation*}
Then
\begin{equation*}
n_q!=n_q\cdot(n-1)_q\cdot\dots\cdot2_q\cdot1_q,
\end{equation*}
and
\begin{equation*}
{n_q \choose k_q}=\frac{n_q!}{k_q!(n-k)_q!}.
\end{equation*}
\end{definition}

\begin{remark}\label{rem-congiungimento-notazionale}
 Let $\underi$ be an element of $\aupleset$ and let $x$ be an integer in $[a,at]$, then we define
\begin{equation*}
\sigmatiny\underi=\sum_{k=1}^a i_k 
\end{equation*}
and
\begin{equation*}
\sigmaAtx{a}{t}{x}=\{\underi\in\aupleset \,|\, \sigmatiny\underi=x\}.
\end{equation*}
With the notation of the proof of Theorem \ref{teo-max-num-massimali} it is easy to see that the set $B_{\underi}$ is exactly the set $\sigmaAtx{a}{t}{x}$ if $\sigmatiny\underi=x$. Therefore, the second claim of the proof of Theorem \ref{teo-max-num-massimali} shows that $\fatx{a}{t}{x}=|B_{\underi}|=|\sigmaAtx{a}{t}{x}|$, where $|\cdot|$ denotes the cardinality of a set.
\end{remark}

\begin{theorem}[\text{\cite[Theorem 3.1]{Andrews}}]\label{teo-quantic-binomial-expression}
Let $n,k$ be integers, then
\begin{equation*}
{(n+k)_q \choose n_q}=\sum_{\lambda \in n\times k} q^{\sigmatiny\lambda}
\end{equation*}
where $n\times k$ are the $\leq n$-tuple with coefficients in $k$, i.e.  $n\times k=\{(i_1,\dots,i_s) \,|\, s\leq n \text{ and } i_l\leq k \text{ for any }l=1,\dots,s\}$.
\end{theorem}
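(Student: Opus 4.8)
The plan is to read the index set $n\times k$ as the set of partitions whose Young diagram fits inside the $n\times k$ rectangle --- weakly decreasing tuples of length at most $n$ with every entry at most $k$, exactly the monotonicity already built into $\aupleset$ --- so that $q^{\sigmatiny\lambda}$ weights each such partition by its size. This reading is essential: if \emph{arbitrary} tuples were admitted the two sides would already disagree at $n=k=2$. I then introduce the generating polynomial $G(n,k)\doteqdot\sum_{\lambda\in n\times k}q^{\sigmatiny\lambda}\in\zz[q]$ and prove $G(n,k)={(n+k)_q\choose n_q}$ by showing the two share the same boundary values and the same recursion, concluding by induction on $n+k$.

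For the boundary, when $n=0$ or $k=0$ the only admissible partition is the empty one, so $G(0,k)=G(n,0)=1$, matching ${k_q\choose 0_q}=1$ and ${n_q\choose n_q}=1$ respectively. The combinatorial heart is then a one-line recursion obtained by splitting the partitions in $n\times k$ according to their largest part $\lambda_1$: those with $\lambda_1\leq k-1$ are precisely the partitions in the $n\times(k-1)$ box, while those with $\lambda_1=k$ are in bijection, via deletion of that first part, with the partitions in the $(n-1)\times k$ box, the deletion lowering $\sigmatiny\lambda$ by exactly $k$. Hence
\begin{equation*}
G(n,k)=G(n,k-1)+q^{k}\,G(n-1,k).
\end{equation*}

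It remains to check that the Gaussian binomial coefficient obeys the identical recursion, namely
\begin{equation*}
{(n+k)_q\choose n_q}={(n+k-1)_q\choose n_q}+q^{k}{(n+k-1)_q\choose (n-1)_q}.
\end{equation*}
Writing both sides over the common denominator $n_q!\,k_q!$ and cancelling the factor $(n+k-1)_q!$, this collapses to the single scalar identity $(n+k)_q=k_q+q^{k}\,n_q$, which is immediate from the geometric-series definition $m_q=1+q+\dots+q^{m-1}$. Matching the two recursions term by term --- noting that ${(n+k-1)_q\choose n_q}$ corresponds to the box $n\times(k-1)$ and ${(n+k-1)_q\choose(n-1)_q}$ to $(n-1)\times k$ --- and invoking the inductive hypothesis on the smaller boxes closes the induction.

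The argument is entirely classical, so I do not expect a serious obstacle; the only points demanding care are the interpretive one --- fixing that the tuples are partitions rather than compositions, without which the weighted count is simply wrong --- and the bookkeeping of the exponent $q^{k}$, so that the combinatorial split lines up with the $q$-Pascal rule in the orientation written above rather than with its mirror image ${(n+k)_q\choose n_q}={(n+k-1)_q\choose (n-1)_q}+q^{n}{(n+k-1)_q\choose n_q}$.
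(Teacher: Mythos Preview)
Your proof is correct and is the standard one. The paper itself does not supply a proof of this theorem: it is simply quoted from Andrews' book \cite{Andrews}, and the argument there is precisely the inductive one you give, splitting partitions in the $n\times k$ box according to whether the largest part equals $k$ and matching this against the $q$-Pascal recursion. Your care in fixing the interpretation of $n\times k$ as partitions (ordered tuples, as in $\aupleset$) rather than arbitrary tuples is well placed, since the paper's wording leaves this implicit.
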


\begin{corollary}\label{cor-coefficients-of-the-quantic-binomial}
 The number $|\sigmaAtx{a}{t}{x}|$ of $s$-tuple with $s\leq a$ with coefficients in $\ttIbar$ whose sum is equal to $x$ is equal to the coefficient of $q^x$ of ${(a+t)_q \choose a_q}$.
\end{corollary}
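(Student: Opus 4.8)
The plan is to obtain the statement as a direct specialization of Andrews' Theorem \ref{teo-quantic-binomial-expression}. First I would put $n=a$ and $k=t$ in that theorem, which then reads ${(a+t)_q \choose a_q}=\sum_{\lambda\in a\times t}q^{\sigmatiny\lambda}$, the index set being $a\times t=\{(i_1,\dots,i_s)\mid s\leq a,\ i_l\leq t\}$. Because $t=|\ttIbar|$ and $\ttIbar=\{1,\dots,t\}$, the requirement $i_l\leq t$ is exactly the requirement that every entry belong to $\ttIbar$; hence $a\times t$ is precisely the family of $s$-tuples with $s\leq a$ and coefficients in $\ttIbar$, and $\sigmatiny\lambda$ is their sum of entries.

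Next I would read off coefficients. Collecting the monomials on the right-hand side according to the value of $\sigmatiny\lambda$, the coefficient of $q^x$ in ${(a+t)_q \choose a_q}$ is the number of $\lambda\in a\times t$ with $\sigmatiny\lambda=x$, that is, the number of $s$-tuples ($s\leq a$) with entries in $\ttIbar$ summing to $x$. By the description adopted in the statement this number is exactly $|\sigmaAtx{a}{t}{x}|$, which is the claimed identity; so on the level of the Corollary's own characterisation the result is immediate from Theorem \ref{teo-quantic-binomial-expression}, with no computation beyond the relabelling of parameters and the coefficient extraction.

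The single point I would treat with care --- and which I expect to be the only genuine obstacle --- is the compatibility of this counting family with the set $\sigmaAtx{a}{t}{x}=\{\underi\in\aupleset\mid\sigmatiny\underi=x\}$ of Remark \ref{rem-congiungimento-notazionale}, whose elements are weakly increasing tuples of length \emph{exactly} $a$, whereas Andrews' $a\times t$ ranges over tuples of length \emph{at most} $a$. To bridge this I would produce an explicit weight-preserving correspondence between the two families, passing an increasing $a$-tuple to its tuple of (reversed) nonzero parts and back; here I would scrutinise the length and positivity conventions, verifying that the statistic $\sigmatiny\lambda$ is matched on the nose and not merely up to an additive shift, since it is precisely the normalisation of $\ttIbar$ and of the admissible lengths that pins down the exponent $x$ on both sides.
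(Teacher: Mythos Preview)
Your first two paragraphs are exactly right and constitute the entire argument the paper intends: the corollary is stated without proof precisely because it is the specialization $n=a$, $k=t$ of Theorem~\ref{teo-quantic-binomial-expression} followed by coefficient extraction. There is nothing more to do.

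Your caution in the third paragraph is well placed, but the resolution is not a bijection: there is in fact \emph{no} weight-preserving correspondence between the set $\sigmaAtx{a}{t}{x}\subset\aupleset$ of Remark~\ref{rem-congiungimento-notazionale} (weakly increasing $a$-tuples in $\{1,\dots,t\}$ summing to $x$) and the family of $\leq a$-tuples in $\{1,\dots,t\}$ summing to $x$. For instance with $a=t=2$, $x=2$ the first set is $\{(1,1)\}$ while the second is $\{(2),(1,1)\}$. The statement of the corollary is simply reusing the symbol $|\sigmaAtx{a}{t}{x}|$ loosely to name the $\leq a$-tuple count described in words; read it that way and your argument is complete. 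The genuine identification with the quantity of the Remark happens only after the shift carried out in the proof of Theorem~\ref{teo-f-and-the-quantic-binomial}, where the corollary is applied with $t$ replaced by $t-1$ and the sum shifted by $a$, and it is that shift that reconciles the two counts. So do not attempt to build the bijection you sketch; it would fail for the reason you anticipate.
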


\begin{theorem}\label{teo-f-and-the-quantic-binomial}
\begin{equation*}
{(a+t-1)_q \choose a_q}=\sum_{x=a}^{at} \fatx{a}{t}{x} q^{x-a}
\end{equation*}
\end{theorem}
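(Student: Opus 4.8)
The plan is to read both sides of the identity as generating functions for one and the same family of partitions and then to invoke the $q$-binomial theorem of Andrews (Theorem \ref{teo-quantic-binomial-expression}). By Remark \ref{rem-congiungimento-notazionale} the coefficient $\fatx{a}{t}{x}$ equals $|\sigmaAtx{a}{t}{x}|$, the number of weakly increasing tuples $\underi=(i_1\leq\dots\leq i_a)$ with entries in $\ttIbar=\{1,\dots,t\}$ and $\sigmatiny\underi=x$. Thus the right-hand side of the statement is the weighted count $\sum_{\underi\in\aupleset}q^{\sigmatiny\underi-a}$, and the entire task is to interpret the shift $q^{x}\mapsto q^{x-a}$ combinatorially and thereby match it to a $q$-binomial built from the \emph{smaller} parameter $t-1$ rather than $t$.

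First I would set up the shifting bijection $\Phi$. To a tuple $\underi=(i_1\leq\dots\leq i_a)\in\aupleset$ I associate the partition obtained by subtracting $1$ from every coordinate and listing the resulting (weakly increasing, possibly zero) entries in decreasing order; this is a partition $\lambda=\Phi(\underi)$ with at most $a$ parts, each at most $t-1$, that is, an element of Andrews' box $a\times(t-1)$, and one checks at once that $\sigmatiny\lambda=\sigmatiny\underi-a$. Conversely, padding $\lambda\in a\times(t-1)$ with zeros up to length $a$, adding $1$ to each entry and sorting increasingly recovers $\underi$, so $\Phi$ is a bijection. Transporting the weight across $\Phi$ yields
\begin{equation*}
\sum_{x=a}^{at}\fatx{a}{t}{x}\,q^{x-a}=\sum_{\lambda\in a\times(t-1)}q^{\sigmatiny\lambda},
\end{equation*}
the range $a\leq x\leq at$ corresponding exactly to $0\leq\sigmatiny\lambda\leq a(t-1)$.

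It then remains only to recognise the right-hand sum. Applying Theorem \ref{teo-quantic-binomial-expression} with $n=a$ and $k=t-1$ gives $\sum_{\lambda\in a\times(t-1)}q^{\sigmatiny\lambda}={(a+t-1)_q\choose a_q}$, which, combined with the previous display, is the assertion. (Alternatively, with the bijection in hand one may read the statement off Corollary \ref{cor-coefficients-of-the-quantic-binomial} applied after the parameter shift $t\mapsto t-1$.)

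The routine verifications — that $\Phi$ lands in the correct box, that it is weight-preserving after the $-a$ shift, and that discarding versus padding trailing zeros reconciles ``exactly $a$ entries'' with ``at most $a$ parts'' — are elementary. The one genuine subtlety, and the step I would treat most carefully, is the joint bookkeeping of the two shifts: the passage from $t$ to $t-1$ and the companion normalisation $q^{x}\mapsto q^{x-a}$. These are not two separate accidents but the two faces of the single operation ``subtract $1$ from each of the $a$ coordinates,'' and making that identification precise is exactly what turns the statement into a one-line consequence of Andrews' formula.
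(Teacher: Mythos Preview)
Your proof is correct and follows essentially the same route as the paper's: both identify $\fatx{a}{t}{x}=|\sigmaAtx{a}{t}{x}|$ via Remark~\ref{rem-congiungimento-notazionale}, perform the ``subtract $1$ from each of the $a$ coordinates'' shift to land in the box $a\times(t-1)$, and then invoke Theorem~\ref{teo-quantic-binomial-expression} with parameters $(a,t-1)$. The only cosmetic difference is that the paper records the shift as the numerical identity $|\sigmaAtx{a}{t}{x}|=\sum_{l=0}^{a}|\sigmaAtx{l}{t-1}{x-a}|$ (stratifying by how many shifted entries remain nonzero), whereas you package it as the explicit bijection $\Phi$; these are two phrasings of the same observation.
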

\begin{proof}
 Using the results of Theorem \ref{teo-quantic-binomial-expression} and Corollary \ref{cor-coefficients-of-the-quantic-binomial}, one gets the following chain of equalities:
 \begin{equation*}
  {(a+t-1)_q \choose a_q} = \sum_{\lambda \in a\times(t-1)} q^{\sigmatiny\lambda} = \sum_{s=0}^{a(t-1)} \beta_s q^s,
 \end{equation*}
 where $\beta_s=\sum_{l=0}^a|\sigmaAtx{l}{t-1}{s}|$ and $|\cdot|$ denotes the cardinality of a set.
 
 Note that, the number of $a$-tuple with coefficients in $\{1,\dots,t\}$ and whose sum is $x$, is equal to the number of $s$-tuple for $0\leq s\leq a$ with coefficients in $\{1,\dots,t-1\}$ and whose sum is $x-a$, i.e.
 \begin{equation*}
 |\sigmaAtx{a}{t}{x}|=\sum_{l=0}^a|\sigmaAtx{l}{t-1}{x-a}|.
 \end{equation*}
 The left-side of the previous equation is equal to $\fatx{a}{t}{x}$ (Remark \ref{rem-congiungimento-notazionale}), while the right-side is equal to $\beta_{x-a}$ (Corollary \ref{cor-coefficients-of-the-quantic-binomial}). Therefore
  \begin{equation*}
 \sum_{s=0}^{a(t-1)} \beta_s q^s = \sum_{x=a}^{at} \beta_{x-a} q^{x-a} = \sum_{x=a}^{at} \fatx{a}{t}{x} q^{x-a}.
 \end{equation*}
\end{proof}

\begin{corollary}
Let $\ttI$, $\aupleset$ and $f_{a,t}(x)$ be as in Theorem \ref{teo-max-num-massimali}. Then
 \begin{equation*}
  {a+t-1 \choose a}=\sum_{x=a}^{at} f_{a,t}(x).
 \end{equation*}
\end{corollary}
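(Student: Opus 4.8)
The plan is to deduce this identity as the $q=1$ specialization of the $q$-analogue already proved in Theorem \ref{teo-f-and-the-quantic-binomial}. That theorem gives the polynomial identity
\[
{(a+t-1)_q \choose a_q}=\sum_{x=a}^{at}\fatx{a}{t}{x}\,q^{x-a},
\]
and the present corollary is exactly what one reads off by setting $q=1$.

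First I would record the one point that deserves a moment's care: the right-hand side $\sum_{x=a}^{at}\fatx{a}{t}{x}\,q^{x-a}$ is an honest polynomial in $q$ with nonnegative integer coefficients. Consequently the $q$-binomial coefficient on the left, which by its definition as a quotient ${(a+t-1)_q!}/({a_q!\,(t-1)_q!})$ is a priori only a rational function of $q$, is in fact a polynomial, and may therefore be evaluated at $q=1$ without any concern about vanishing denominators. Next I would carry out the substitution $q=1$ termwise. On the left, each $q$-integer degenerates to the ordinary integer, $n_q\big|_{q=1}=1+1+\dots+1=n$, so that $n_q!\big|_{q=1}=n!$ and hence
\[
{(a+t-1)_q \choose a_q}\bigg|_{q=1}=\frac{(a+t-1)!}{a!\,(t-1)!}={a+t-1 \choose a}.
\]
On the right, every factor $q^{x-a}$ becomes $1$, so the sum collapses to $\sum_{x=a}^{at}\fatx{a}{t}{x}$. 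Equating the two specialized sides yields the claim.

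I do not expect a genuine obstacle here: the entire content of the corollary is already packaged in Theorem \ref{teo-f-and-the-quantic-binomial}, and the statement is simply its classical ``$q\to 1$'' shadow. As a sanity check one can note the combinatorial meaning of both sides at $q=1$: the classical count $\binom{a+t-1}{a}$ is the number of size-$a$ multisets drawn from $\ttIbar$, that is $|\aupleset|$, while $\sum_{x}\fatx{a}{t}{x}=\sum_x|\sigmaAtx{a}{t}{x}|$ partitions $\aupleset$ according to the coordinate sum $\sigmatiny\underi$; since every element of $\aupleset$ has a unique such sum, the two counts agree, which is precisely the asserted equality.
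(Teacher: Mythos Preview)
Your proof is correct and follows exactly the paper's own argument: the paper's proof is the single line ``Follows directly from Theorem \ref{teo-f-and-the-quantic-binomial} when $q=1$,'' and you have simply made explicit why the specialization is legitimate and added a combinatorial sanity check.
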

\begin{proof}
Follows directly from Theorem \ref{teo-f-and-the-quantic-binomial} when $q=1$.
\end{proof}

\begin{corollary}
 The maximum of $f_{a,t}(x)$ is attained for $x=\lfloor\frac{a(t+1)}{2}\rfloor$.
\end{corollary}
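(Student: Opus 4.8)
The plan is to read $\fatx{a}{t}{x}$ off the Gaussian binomial coefficient and then exploit its symmetry and unimodality. By Theorem~\ref{teo-f-and-the-quantic-binomial} the numbers $\fatx{a}{t}{x}$, for $x=a,\dots,at$, are exactly the coefficients of the polynomial ${(a+t-1)_q \choose a_q}$, which has degree $a(t-1)$. I will show that this coefficient sequence is symmetric about its centre and unimodal; the two facts together pin the maximum at the central index, which equals $\lfloor\frac{a(t+1)}{2}\rfloor$.

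First I would record the symmetry. By Remark~\ref{rem-congiungimento-notazionale}, $\fatx{a}{t}{x}=|\sigmaAtx{a}{t}{x}|$ counts the tuples $\underi=(i_1,\dots,i_a)\in\aupleset$ with $\sigmatiny\underi=x$, all of whose entries lie in $\ttIbar=\{1,\dots,t\}$. The map $(i_1,\dots,i_a)\mapsto(t+1-i_a,\dots,t+1-i_1)$ is an involution of $\aupleset$ that turns a tuple of sum $x$ into one of sum $a(t+1)-x$; hence it restricts to a bijection $\sigmaAtx{a}{t}{x}\to\sigmaAtx{a}{t}{a(t+1)-x}$ and yields
\begin{equation*}
\fatx{a}{t}{x}=\fatx{a}{t}{a(t+1)-x}.
\end{equation*}
Thus the sequence is symmetric about $x_0=\frac{a(t+1)}{2}$.

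The substantial step is unimodality: as $x$ runs from $a$ to $at$, the values $\fatx{a}{t}{x}$ first increase weakly and then decrease weakly. This is the classical unimodality of the Gaussian binomial coefficients, and here I would invoke it directly rather than reprove it, since the elementary Pascal-type recursion does \emph{not} deliver it by a one-line induction (the two summands are symmetric about different centres). Conceptually, unimodality is the statement that the poset of partitions fitting in an $a\times(t-1)$ box is rank-unimodal, which follows from the existence of a symmetric chain decomposition; this is the only genuinely nontrivial input and the place where care is needed.

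Granting symmetry and unimodality, a finite sequence that is symmetric about $x_0$ and unimodal attains its maximum at the integer(s) closest to $x_0$. Since the polynomial has degree $a(t-1)$, in the exponent variable $j=x-a$ the peak sits at $j=\lfloor\frac{a(t-1)}{2}\rfloor$, so
\begin{equation*}
x=a+\Big\lfloor\tfrac{a(t-1)}{2}\Big\rfloor=\Big\lfloor\tfrac{a(t+1)}{2}\Big\rfloor,
\end{equation*}
the last equality because the integer $a$ may be absorbed into the floor. When $a(t+1)$ is odd the maximum is attained at both $\lfloor\frac{a(t+1)}{2}\rfloor$ and $\lceil\frac{a(t+1)}{2}\rceil$, in agreement with the stated value.
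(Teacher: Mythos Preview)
Your proof is correct and follows essentially the same approach as the paper: the paper's proof is the single sentence ``Follows directly from the properties of the quantic binomial,'' and you have simply spelled out which properties do the work (symmetry and unimodality of the Gaussian binomial coefficients) and checked the arithmetic locating the central index. Your honest acknowledgment that unimodality is the nontrivial input, invoked rather than reproved, matches the paper's implicit reliance on the same classical fact.
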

\begin{proof}
 Follows directly from the properties of the quantic binomial.
\end{proof}

\begin{proposition}\label{prop-formula-fatx}
Let $a,t,x$ as usual. Then
\begin{equation*}
\fatx{a}{t}{x} = \fatx{a}{t-1}{x}+\fatx{a-1}{t}{x-t},
\end{equation*}
or, equivalently,
\begin{equation*}
\fatx{a}{t}{x} = \fatx{a}{t+1}{x}-\fatx{a-1}{t+1}{x-t-1}
\end{equation*}
\end{proposition}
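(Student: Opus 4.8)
The plan is to give a direct bijective proof of the first identity and then to deduce the second by an elementary substitution. By Remark~\ref{rem-congiungimento-notazionale} the quantity $\fatx{a}{t}{x}$ equals $|\sigmaAtx{a}{t}{x}|$, the number of non-decreasing $a$-tuples $\underi=(i_1,\dots,i_a)$ with entries in $\ttIbar=\{1,\dots,t\}$ and $\sigmatiny\underi=x$. I would partition the set $\sigmaAtx{a}{t}{x}$ according to the value of its largest entry $i_a$, which necessarily satisfies $i_a\le t$, so that the two cases $i_a\le t-1$ and $i_a=t$ exhaust $\sigmaAtx{a}{t}{x}$ disjointly.

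First I would treat the tuples with $i_a\le t-1$: for these the entire tuple already lies in $\{1,\dots,t-1\}$, so they are precisely the elements of $\sigmaAtx{a}{t-1}{x}$, contributing $\fatx{a}{t-1}{x}$. Next I would treat the tuples with $i_a=t$: deleting the last coordinate produces a non-decreasing $(a-1)$-tuple $(i_1,\dots,i_{a-1})$ with entries in $\{1,\dots,t\}$ and sum $x-t$, and conversely appending $t$ to any such tuple recovers an element of $\sigmaAtx{a}{t}{x}$ with largest entry $t$. This map is a bijection onto $\sigmaAtx{a-1}{t}{x-t}$, contributing $\fatx{a-1}{t}{x-t}$. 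Adding the two disjoint contributions gives $\fatx{a}{t}{x}=\fatx{a}{t-1}{x}+\fatx{a-1}{t}{x-t}$. For the equivalent form I would then apply the identity just established with $t$ replaced by $t+1$, namely $\fatx{a}{t+1}{x}=\fatx{a}{t}{x}+\fatx{a-1}{t+1}{x-(t+1)}$, and solve for $\fatx{a}{t}{x}$.

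There is no serious obstacle here; the only point requiring attention is to verify that the conventions fixed in \eqref{eq-function} make the bijective argument valid at the boundary --- in the degenerate situations $a=1$, $t=1$, and when $x-t$ or $x$ falls outside the admissible range $[a,at]$ --- but these reduce to immediate case checks against the base-case definitions of $f_{a,1}$ and $f_{1,t}$. As a cross-check, the same recurrence can be read off from Theorem~\ref{teo-f-and-the-quantic-binomial} together with the Pascal-type rule ${(a+t-1)_q \choose a_q}={(a+t-2)_q \choose a_q}+q^{t-1}{(a+t-2)_q \choose (a-1)_q}$ for the quantic binomial, by comparing the coefficient of $q^{x-a}$ on both sides; I would nonetheless prefer the bijective argument, since it avoids invoking the $q$-Pascal identity and makes the role of the largest part completely transparent.
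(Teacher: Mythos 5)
Your proof is correct, but it proceeds along a genuinely different route from the paper's. The paper never touches the combinatorial interpretation: it works purely formally with the defining recursion \eqref{eq-function}, rewriting it as $\pkn{a}{x}=\fatx{a}{t}{x}+\sum_{k=t+1}^{x-a+1}\fatx{a-1}{k}{x-k}$, observing that the left-hand side is independent of $t$, writing the same identity with $t-1$ in place of $t$, and equating; the two sums differ by exactly the single term $k=t$, which is $\fatx{a-1}{t}{x-t}$, and the identity drops out. You instead invoke Remark \ref{rem-congiungimento-notazionale} to identify $\fatx{a}{t}{x}$ with $|\sigmaAtx{a}{t}{x}|$ and then classify the tuples in $\sigmaAtx{a}{t}{x}$ by whether the largest entry is at most $t-1$ or equal to $t$, deleting a largest part in the second case. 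Both arguments are sound. The paper's has the advantage of being self-contained at the level of the recursion --- no boundary discussion is needed, since one is literally subtracting two instances of the same defining equation --- whereas your bijection inherits its validity from the identification established inside the proof of Theorem \ref{teo-max-num-massimali}, and, as you yourself note, requires checking the degenerate cases ($a=1$, $t=1$, $x-t<a-1$) against the base-case conventions in \eqref{eq-function}. What your argument buys in exchange is transparency: it exhibits the identity as the Pascal-type rule for partitions into exactly $a$ parts bounded by $t$, which also explains, with no extra work, why the proposition is the coefficientwise shadow of the $q$-Pascal identity for $\binom{(a+t-1)_q}{a_q}$ appearing in Theorem \ref{teo-f-and-the-quantic-binomial}.
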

\begin{proof}
From \eqref{eq-function} one gets that
\begin{equation*}
\pkn{a}{x}=\fatx{a}{t}{x}+\sum_{k=t+1}^{x-a+1}\fatx{a-1}{k}{x-k}.
\end{equation*}
Since the left-side of the previous equation does not depend on $t$ we can change $t$ with $t-1$ and get the equality
\begin{equation*}
\pkn{a}{x}=\fatx{a}{t-1}{x}+\sum_{k=t}^{x-a+1}\fatx{a-1}{k}{x-k}.
\end{equation*}
Therefore,
\begin{equation*}
\fatx{a}{t-1}{x}+\sum_{k=t}^{x-a+1}\fatx{a-1}{k}{x-k} = \fatx{a}{t}{x}+\sum_{k=t+1}^{x-a+1}\fatx{a-1}{k}{x-k},
\end{equation*}
and the thesis follows.
\end{proof}

\section{Rank $3$ tensor sheaves on the projective line}\label{sec-rank-3-tensors-on-p1}

In this section we want to describe all degree zero rank $3$ slope semistable tensors $(E,\varphi)$ of type $(3,b,\odi{\pp^1})$ on $\pp^1$, where $\odi{\pp^1}$ is the  trivial bundle.

Since we are on $\pp^1$, the bundle $E$ decompose as $E=L_1\oplus L_2 \oplus L_3$. We will denote by $d_i$ the degree of $L_i$, by $\kkvoid_i$ the number $\kk{L_i}{E}$ and by $d_{ij},c_{ij}$ and $\kkvoid_{ij}$ the corresponding invariants for the bundles $L_i\oplus L_j$. Note that, in this setting, the bundles $E^{\otimes3}$ decomposes as:
\begin{align*}
E^3= L_1^{\otimes3}\oplus L_1^{\otimes3}\oplus L_1^{\otimes3}\oplus 3(L_1^{\otimes2}\otimes L_2)\oplus 3(L_1^{\otimes2}\otimes L_3) \oplus 3(L_2^{\otimes2}\otimes L_1)\oplus\\
\oplus3(L_2^{\otimes 2}\otimes L_3)\oplus 3(L_3^{\otimes2}\otimes L_1)\oplus 3(L_3^{\otimes2}\otimes L_2) \oplus 6(L_1\otimes L_2\otimes L_3).
\end{align*}

As we have already remarked, if the matrix associated to a filtration has only one pivot, then the filtration splits (see Corollary \ref{cor-un-pivot-slope-ss}) and so $L_\ttI^\mu(\efiltration)=\sum_{i\in\ttI} L_{\{i\}}^\mu(0\subset E_i\subset E, \alpha_i)$ and, if the filtration destabilizes, then at least one element $E_i$ of the filtration $\kkvoid$-destabilizes. Thanks to Theorem \ref{teo-max-num-massimali}, the maximum number of pivots in the case $a=r=3$ is $\max_{3\leq x\leq 9}f_{3,3}(x)$. The maximum of $f_{3,3}(x)$ is attained for $x=6$ and $f_{3,3}(6)=2$ therefore, in this case, the matrix associated to any filtration has one or at most two pivots, no matter what $(E,\varphi)$ is. A simple calculation shows that all possible matrices having two pivots are the following:
\begin{enumerate}
\item $\pivot_1=(1,1,3)$ and $\pivot_2=(1,2,2)$
\item $\pivot_1=(1,1,3)$ and $\pivot_2=(2,2,2)$
\item $\pivot_1=(1,2,3)$ and $\pivot_2=(2,2,2)$ 
\item $\pivot_1=(1,3,3)$ and $\pivot_2=(2,2,2)$
\item $\pivot_1=(1,3,3)$ and $\pivot_2=(2,2,3)$
\end{enumerate}

Clearly the semistability condition depends on the morphism $\varphi$, for example, if $\varphi=0$ then $E$ must be a semistable bundle in the usual way hence we obtain $d_1=d_2=d_3$. So, from now on, we will assume that $\varphi$ is not identically zero. We start from the following easy but fundamental result.

\begin{lemma}
Let us assume that the semistability conditions for the tensor $(E,\varphi)$ hold for any filtartion obtained starting from the line bundles $L_i$, id est, all the filtrations
\begin{equation*}
0\subset L_i\subset L_i\oplus L_j \subset E
\end{equation*}
then the tensor $(E,\varphi)$is semistable.
\end{lemma}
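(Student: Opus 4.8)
The plan is to reduce the semistability of the rank $3$ tensor $(E,\varphi)$ on $\pp^1$ to the finitely many types of destabilizing filtrations allowed by the earlier structural results, and then to show that every such type is already controlled by the length-one filtrations $0\subset L_i\subset E$ and the two-step filtrations $0\subset L_i\subset L_i\oplus L_j\subset E$. First I would invoke the combinatorial bound: by Theorem \ref{teo-max-num-massimali} the associated matrix $M_\ttI(\efil;\varphi)$ in the case $a=r=3$ has at most $f_{3,3}(6)=2$ pivots, so $p\leq 2$. By Theorem \ref{teo-main}, any destabilizing weighted filtration of length $s\geq p+1$ admits a proper destabilizing subfiltration; hence it suffices to rule out destabilizing filtrations of length $s\leq p\leq 2$. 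A filtration of length $1$ is precisely a filtration $0\subset F\subset E$, and by the $\kkvoid$-semistability reformulation these are exactly the conditions attached to subsheaves $F$; on $\pp^1$ the saturated subsheaves of $E=L_1\oplus L_2\oplus L_3$ of rank $1$ are governed by the $L_i$ (up to the slope ordering), so the length-one conditions are covered by the hypothesis.

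The main work is therefore the length-two case, where $p=2$ and the matrix is one of the five pivot configurations listed in the excerpt. For each configuration I would use Corollary \ref{cor-un-pivot-slope-ss} as the reference point: if a filtration had only one pivot it would split, so $L_\ttI^\mu=\sum_{i\in\ttI}L_{\{i\}}^\mu$ and destabilization would force one of the length-one pieces to $\kkvoid$-destabilize, which is excluded by hypothesis. Thus I only need to treat genuinely two-pivot, critical filtrations $0\subset E_{i}\subset E_{j}\subset E$. The piecewise-linear analysis from the proof of Theorem \ref{teo-main} applies: the function $\betafil\mapsto L_\ttI^\mu(\efil,\betafil;\varphi)$ is linear on each region $V_k$ where the maximizing pivot is fixed, so it suffices to check the sign of $L_\ttI^\mu$ at the boundaries and at the coordinate hyperplanes $\beta_1=0$ and $\beta_2=0$. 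At each coordinate hyperplane the two-step filtration degenerates to one of the length-one or length-two building blocks in the hypothesis, so nonnegativity there follows from the assumption.

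Concretely, for a fixed pivot configuration I would write $L_\ttI^\mu(\efil,\betafil;\varphi)=\beta_1 c_{i}+\beta_2 c_{j}+\ol{\delta}\,r\,\rttIa{\betafil}$ and split the positive cone $\rr^2_+$ into the two linear regions determined by the two pivots, exactly as in the sharpness example of the Remark following Theorem \ref{teo-main}. In each region $L_\ttI^\mu$ is a linear form in $(\beta_1,\beta_2)$, so its minimum over the region is attained on the boundary, i.e. either on the ray $2\beta_1=\beta_2$-type wall separating $V_1$ from $V_2$ or on one of the axes; by linearity it suffices that the relevant length-one and length-two filtrations, which are precisely the filtrations $0\subset L_i\subset E$ and $0\subset L_i\subset L_i\oplus L_j\subset E$ assumed semistable in the hypothesis, give nonnegative values. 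Matching each of the five pivot types to the appropriate building-block filtration is where the argument must be carried out case by case, and I expect that bookkeeping — identifying for each pivot configuration which $0\subset L_i\subset L_i\oplus L_j\subset E$ the wall degenerates to — to be the main obstacle, though each individual check is a short linear computation of the kind displayed in the sharpness Remark.
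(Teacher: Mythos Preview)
Your proposal has a genuine gap: the structural theorems you invoke (Theorem~\ref{teo-main}, Theorem~\ref{teo-max-num-massimali}, Corollary~\ref{cor-un-pivot-slope-ss}) only reduce the \emph{length} of a filtration or split it into shorter pieces built from the \emph{same} subsheaves. They never let you replace an arbitrary subsheaf $E_1\subset E$ by one of the summands $L_i$. So when you write that at a coordinate hyperplane ``the two-step filtration degenerates to one of the length-one or length-two building blocks in the hypothesis'', this is false: setting $\beta_2=0$ in a filtration $0\subset E_1\subset E_2\subset E$ gives $0\subset E_1\subset E$ with the \emph{arbitrary} $E_1$, not $0\subset L_i\subset E$. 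Likewise your length-one discussion (``saturated subsheaves \dots are governed by the $L_i$'') is the entire content of the lemma, not something you can wave through: a rank-one saturated $F\subset E$ need not equal any $L_i$, and you must compare both $\deg F$ and $\kk{F}{E}$ to those of some $L_i$. (Incidentally, invoking Theorem~\ref{teo-main} to bound the length is superfluous here, since $r=3$ already forces $s\le 2$.)

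The paper's proof supplies precisely this missing comparison, and it is much shorter than your outline. Given any length-two filtration $0\subset E_1\subset E_2\subset E$, one chooses (after reordering) injections $E_1\hookrightarrow L_1$ and $E_2\hookrightarrow L_1\oplus L_2$ whose cokernels are torsion (this uses that we are on $\pp^1$, so every subsheaf of a split bundle maps into a sub-sum of the summands with rank-zero cokernel). The torsion-cokernel property forces $\rest{\varphi}{L_1^{\otimes s}\otimes E^{\otimes(a-s)}}\neq 0 \Rightarrow \rest{\varphi}{E_1^{\otimes s}\otimes E^{\otimes(a-s)}}\neq 0$, so the $\mu$-term is \emph{equal} for the two filtrations; and $\deg E_1\le\deg L_1$, $\deg E_2\le\deg(L_1\oplus L_2)$ give $L_\ttI(\efiltration)\ge L_\ttI(0\subset L_1\subset L_1\oplus L_2\subset E,\alphafil)$. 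Hence $L_\ttI^\mu$ for the arbitrary filtration dominates that for the special one, which is nonnegative by hypothesis. No pivot enumeration or piecewise-linear analysis is needed.
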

\begin{proof}
Let 
\begin{equation*}
(\efiltration)=(0\subset E_1\subset E_2\subset E,(\alpha_1,\alpha_2))
\end{equation*}
any weighted filtration. We want to prove that there exist indexes $i,j\in\{1,2,3\}$ such that
\begin{equation*}
L^\mu(\efiltration;\varphi) \geq L^\mu(0\subset L_i\subset L_i\oplus L_j\subset E,\alphafil;\varphi).
\end{equation*}
Since $\rk{E_1}=1$, without loss of generality, we can assume that exists an injection $g:E_1\to L_1$. Clearly the cokernel of this morphism is a rank zero sheaf, hence, if we have a non zero morphism $\varphi:L_1^{\otimes s}\otimes E^{\otimes a-s}\to\odi{\pp^1}$, then the restriction $\rest{\varphi}{E_1}:E_1^{\otimes s}\otimes E^{\otimes a-s}\to\odi{\pp^1}$ will be non zero as well, that is $\kk{L_1}{E}=\kk{E_1}{E}$. A similar argument works also for the rank $2$ sheaf $E_2$ after considering the decomposition $E_2=M_1\oplus M_2$, where $M_i$ are rank one sheaves. More precisely, up to changing the order of $L_i$, we can assume that $M_i\subset L_i$. So we have proven that 
\begin{equation*}
\mu(E^\bullet,\alphafil;\varphi)= \mu(0\subset L_i\subset L_i\oplus L_j\subset E,\alphafil;\varphi)
\end{equation*}
Moreover,since we have a non zero morphism  $g:E_1\to L_1$ we have that $\deg(E_1)\leq \deg(L_1)$, and $\deg(E_2)\leq \deg(L_1\oplus L_2)$. So we have 
\begin{equation*}
L(E^\bullet,\alphafil)\geq L(0\subset L_i\subset L_i\oplus L_j\subset E,\alphafil)
\end{equation*}
which concludes the proof.
\end{proof}
From now on, we will denote by $(i,ij)$ the following filtration
\begin{equation*}
0\subset L_i\subset  L_i\oplus L_j \subset E,
\end{equation*}
indexed by $\ttI=\{1,2\}$.  So, for example, the element $(1,2,3)$ of the associated matrix describes the beahvour of $\varphi$ restricted to the bundle $L_i\otimes (L_i\oplus L_j) \otimes E$.
 
If $E$ is the trivial bundle $E=\odi{\pp^1}^3$, then the semistability conditions given for the subbundles $L_i=\odi{\pp^1}$ imply that $\kkvoid_i\geq 1$. Let consider now the filtration $(i,ij)$. An easy computation shows that a such filtration is critical if and only if $k_j=3$, and in this case the pivots are $(1,2,3)$ and $(2,2,2)$ and in both cases the semistability conditions are satisfied.\\

So now we assume that E is not trivial, id est $E\neq \ox^3$. We choose the indexes in a such way that $d_1\leq d_2 \leq d_3$, hence, since the degree of $E$ is $0$, all the possible cases are the following:
\begin{enumerate}
 \item[(i)] $d_1<0<d_2<d_3$;
 \item[(ii)] $d_1<d_2\leq0<d_3$;
 \item[(iii)] $d_1=d_2<d_3$;
 \item[(iv)] $d_1<d_2=d_3$.\\
\end{enumerate}

Since $E$ is semistable, considering the semistability condition given by subsheaves, id est, the slope $\kkvoid$-semistability, we obtain the following conditions for $i=1,2,3$:
\begin{equation*}
d_i-\kkvoid_i\ol{\delta} \leq \frac{d}{3}-\ol{\delta} =-\ol{\delta}
\end{equation*}
So in particular, $\kkvoid_3\geq 2$. However, since $d_3>0$ there are no non-zero morphisms $L_3^{\otimes 3}\to\odi{\pp^1}$, hence $\kkvoid_3=2$, so, by definition of $\kkvoid$, the restriction of $\varphi$ to $L_3^{\otimes2}\otimes E$ gives a non-zero morphism $(L_3^{\otimes2}\otimes E)^{\oplus b}\to\odi{\pp^1}$, that is a non-zero morphism:
\begin{equation*}
 \left((L_3^{\otimes 2}\otimes L_1)\oplus (L_3^{\otimes 2}\otimes L_2)\oplus L_3^{\otimes 3}\right)^{\oplus b}\longrightarrow\odi{\pp^1}
\end{equation*}
Since $\deg(L_3^{\otimes 3})$ and $\deg(L_3^{\otimes 2}\otimes L_2)$ are strictly positive, we get that the only non-zero component of the previous morphism must start from $(L_3^{\otimes 2}\otimes L_1)^{\oplus b}$ and this implies that $2d_3+d_1\leq 0$. Since $d_3\geq d_2$ and by assumption $d_1+d_2+d_3=0$, the only possibility is that $d_2=d_3=-\frac{d_1}{2}$. Moreover, the existence of a non zero morphism $(L_3^{\otimes 2}\otimes L_1)^{\oplus b}\to\odi{\pp^1}$ implies that $\kkvoid_1\geq1$.\\

Case $\kkvoid_1=1$. The non-equivalent filtrations we have to consider are the filtrations $(1,12)$, $(2,12)$ and $(3,23)$. Let us consider first the filtration $(1,12)$, that is, the filtration:
\begin{equation*}
0\subset L_1\subset L_1\oplus L_2 \subset E.
\end{equation*}
Since $\kkvoid_1=1$, then one pivot is $(1,2,2)$, and the only possibility for another pivot is in the position $(1,1,3)$; however this element of the matrix associated to $\varphi$ is zero; otherwise, $\kkvoid_1$ should be $2$, So the filtration is not critical. Now consider the filtrations $(2,12)$ and $(3,23)$, that is, the filtrations
\begin{equation*}
0\subset L_2\subset L_1\oplus L_2 \subset E \text{ and } 0\subset L_2\subset L_2\oplus L_3 \subset E.
\end{equation*}
In the first case one pivot is $(1,1,2)$ while, in the second case, it is $(1,1,3)$, but in either cases the filtrations are not critical.\\

Case $\kkvoid_1\geq2$. In this case, one pivot of any of the previous filtrations is $(1,1,2)$; hence, there are not critical filtrations.


\bigskip
\end{document}